\newtheorem{theorem}{Theorem}[section]
\newtheorem{proposition}[theorem]{Proposition}
\newtheorem{lemma}[theorem]{Lemma}
\theoremstyle{definition}
\newtheorem{definition}[theorem]{Definition}
\theoremstyle{remark}
\newtheorem{remark}[theorem]{Remark}
\numberwithin{equation}{section}
\newcommand{\inner}[2]{\langle#1,#2\rangle}
\newcommand{\CC}{{\mathbb C}}
\newcommand{\LL}{L}
\newcommand{\RR}{{\mathbb R}}
\newcommand{\ZZ}{{\mathbb Z}}
\newcommand{\calH}{{\mathcal H}}
\renewcommand{\L}{{\mathcal L}}
\renewcommand{\O}{{\mathcal O}}
\newcommand{\fg}{{\mathfrak g}}
\DeclareMathOperator{\curv}{curv}
\DeclareMathOperator{\hol}{hol}
\DeclareMathOperator{\Int}{Int}
\DeclareMathOperator{\Lie}{Lie}
\DeclareMathOperator{\Log}{Log}
\DeclareMathOperator{\supp}{supp}
\renewcommand{\Log}{\log}
\newcommand{\frakt}{{\mathfrak t}}
\newcommand{\frakg}{{\mathfrak g}}
\newcommand{\frakk}{{\mathfrak k}}
\newcommand{\bbR}{{\mathbb R}}
\newcommand{\bbZ}{{\mathbb Z}}
\newcommand\Cinf{\mathcal{C}^\infty}
\newcommand{\st}[1]{\ensuremath{^{\scriptstyle \textrm{#1}}}}
\begin{document}


\newcommand{\Ham}{\mbox{\sc Ham}}

\newcommand{\Aut}{\mbox{\sc Aut}}

\title{The spectral density function of a toric variety}

\author{D. Burns}\address{Department of Mathematics\\
University of Michigan \\Ann Arbor, MI 48109}
\thanks{D.B. supported in part by NSF grant DMS-0514070.}
\email{dburns@umich.edu}
\author{V. Guillemin}\address{Department of Mathematics\\
Massachusetts Institute of Technology\\ Cambridge, MA 02139} \email{vwg@mit.edu}
\thanks{V.G. supported in part by NSF grant DMS-0408993.}
\author{A. Uribe}
\thanks{A.U. supported in part by NSF grant DMS-0401064.}
\address{Department of Mathematics\\
University of Michigan\\Ann Arbor, MI 48109}
\email{uribe@umich.edu}

\maketitle

\tableofcontents

\begin{abstract}
For a K\"ahler manifold $X, \omega$ with a holomorphic line bundle $L$ and metric $h$ 
such that the Chern form of $L$ is $\omega$, the spectral measures are the measures 
$\mu_N = \sum |s_{N,i}|^2 \nu$, where $\{s_{N,i}\}_i$ is an 
$L^2$-orthonormal basis for $H^0(X, L^{\otimes N})$, and $\nu$ is Liouville measure. We study the asymptotics in $N$ of $\mu_N$ for $X, L$ a {\em Hamiltonian} toric manifold, 
and give a very precise expansion in terms of powers $1/N^j$ and data on the moment
polytope $\Delta$ of the Hamiltonian torus $K$ acting on $X$.   In addition, for a character $k$ of $K$ and the unique unit eigensection $s_{Nk}$ for the character $N k$ of the torus action on $H^0(X, L^{\otimes N})$, we give a similar expansion for the measures $\mu_{Nk} = |s_{Nk}|^2 \nu$.  A final remark shows that the eigenbasis  $\{s_{k}, k \in \Delta \cap \mathbb{Z}^{\dim K} \}$ is a Bohr-Sommerfeld basis in the sense of \cite{Ty}, 
and that the asymptotic results of \cite{BPU} are exact in this case. 

Some of the present results are closely related to earlier results of \cite{10}. The present paper uses no microlocal analysis, but rather an Euler-Maclaurin formula for Delzant polytopes.
\end{abstract}

\section{Introduction}
\label{sec:1}

The purpose of this note is to explore a fundamental problem
in spectral theory in the context of ``toric geometry".
This problem, formulated in the context of Riemannian
geometry, is the following:  Let $M$ be a compact Riemannian manifold, and let
$\varphi_i$, $i=1,\,2,\ldots$ be an orthonormal basis of
eigenfunctions of the Laplace operator.  What can one say
about  the spectral measures
\begin{equation}\label{new1.1}
\mu_i = |\varphi_i|^2\,dx
\end{equation}
as $i$ tends to infinity?  For instance, if the geodesic flow
on $T^*M$ is ergodic, Schnirelman-Colin de Verdi\`ere-Zelditch
proved that along ``most" subsequences $i_1,\ i_2,\cdots$,
$\mu_i$ tends weakly to the volume measure, $dx$.
(This phenomenon is known as ``quantum ergodicity", and its violation
by certain exceptional sequences of $\varphi_i$'s as ``quantum scarring".)
However, what can one say about the limiting behavior of the $\mu_i$'s
if one makes other assumptions about geodesic flow, e.\ g.\ that it be
periodic or completely integrable?  If geodesic flow is periodic then it
is known that for each geodesic $\gamma$ there exists a sequence of
``quasi-modes", $\phi_i$, $i=i_1,\ i_2,\ldots$, such that $\mu_i$
tends in the limit to a delta function on $\gamma$.  On the other
hand it is also known that the eigenvalues of $\sqrt{\Delta}$ clump
into clusters, 
\[
\lambda_{i,k},\quad k=1,\ldots,N_i
\]
with $|\lambda_{i,k}-(ai+b)| = O(i^{-1})$ 
(for suitable constants $a$, $b$), and it is known that a 
vestige of quantum ergodicity survives:  The measures
\begin{equation}\label{new1.2}
\nu_i = \sum_k \mu_{i,k} = \sum_k |\varphi_{i,k}|^2\,dx
\end{equation}
tend in limit to the volume measure.  (The simplest example is 
$S^{n-1}$.  In this case $\nu_i$ is SO$(n)$ invariant and hence
\underline{is} the volume measure up to a constant factor.)

\medskip
There is another important instance in which the eigenvalues can be clumped
into clusters:  If a compact Lie group $K$ acts on $M$ by isometries
one can decompose the eigenspaces of $\Delta$ into $K$-invariant subspaces,
and consider the spectral measures (\ref{new1.2}), where the
$\varphi_{i,k}$'s are orthonormal bases of these subspaces.
If $K$ is an $n$-torus it is also natural to study the asymptotic
behavior of the measure (\ref{new1.1}) not for arbitrary
sequences of $\varphi_i$s, but for sequences for which $\varphi_i$ lies
in a weight space of $K$ of weight $\alpha_i$ and the $\alpha_i$
tend asymptotically to infinity along a ray in $\frakk^*$.  In both these
cases one would like to be able to relate the asymptotics of
$\mu_i$ and $\nu_i$ to properties of the geodesic flow.

\medskip
These problems have analogues in K\"ahler geometry:  If $X$ is a compact
K\"ahler manifold and $L\to X$ a Hermitian line bundle whose curvature form
is the negative of the K\"ahler form, then one can consider the asymptotic
behavior of the measures
\[
\mu_N = \sum |\varphi_{N,k}|^2\,\nu
\]
where $\{\varphi_{N,k}\;;\;k=1,\ldots d_N\}$ is an orthonormal basis of
$\Gamma_{\mbox{\tiny hol}} (L^N)$ (holomorphic sections of $(\LL^N)$) 
and $\nu$ is Liouville measure.
\footnote{This measure can be defined intrinsically as the measure
\[
C^\infty(X)\ni f\mapsto \mbox{Trace}\ \Pi_N\, M_f\,\Pi_N,
\]
where $\Pi_N$ is the orthogonal projection of
$\Gamma (L^N)$ onto
$\Gamma_{\mbox{\tiny hol}} (L^N)$, and $M_f$ is the 
operator ``multiplication by $f$".}
(Notice that $\mu_N$ is now a measure on ``phase space".
The analogue of $X$ in the case of periodic geodesic flow is the
quotient of the unit cotangent bundle of $M$ by the flow.)
Using general results about the microlocal structure of
Szeg\"o kernels, \cite{2}, one can
prove that the $\mu_N$ have a weak asymptotic expansion
as $N\to\infty$ with leading term Liouville measure.

If there is an action on $X$ of a torus, $K$,
preserving the K\"ahler structure and preserving $L$,
one can decompose the spaces $\Gamma_{\mbox{\tiny hol}} (L^N)$
into weight spaces and, as above, study the asymptotics of
the measure $|\varphi_{N,k}|^2\,\nu$ associated with
sequences of weights which tend asymptotically to infinity along
rays in $\frakk^*$.

\medskip
The purpose of this article is to examine both of these problems 
in the setting of ``toric geometry".  As a toric variety 
(together with its canonical K\"ahler metric) is completely
determined by its moment polytope, it is natural to seek results
formulated explicitly in terms of the polytope.  

\newcommand{\ND}{[N\Delta]}
\medskip
In more detail, let $K$
be an $n$-dimensional torus, $X$ a (non-singular)
$K$-toric variety, $\Phi : X
\to k^*$ the moment map associated with the action of $K$ on $X$
and $\Delta = \Phi (X)$ the moment polytope.  Under the action of
$K$ the space, $\Gamma_{\hol} (\LL^N)$, 
breaks up into an
orthogonal direct sum of one-dimensional weight spaces
\begin{displaymath}
  \Gamma_{\hol} = \bigoplus_{k\in \ND} \Gamma_{k}
\end{displaymath}
indexed by the set $[N\Delta]$ of integer lattice points
in the dilated polytope $N\Delta$, and thus
\begin{equation}
  \label{eq:1.1}
  \mu_N = \sum \langle s_{k}, s_{k}\rangle\, \nu,
\end{equation}
where $\{s_{k} \in \Gamma_{k}\;;\;k \in \ND\}$ is
an orthonormal basis of $\Gamma_{\hol}$ 
and $\langle s_{k} , s_{k}\rangle (p)$ is the
norm-squared of $s_{k} (p) \in \LL^N_p$.  Thus to understand
the asymptotic behavior of $\mu_N$ one has to understand the
asymptotic behavior of the functions $\langle s_{k},
s_{k}\rangle$.    Our first step in this direction is the
following explicit formula for this function.  Let $d$ be the
number of facets of the polytope $\Delta$, and $\ell_i : \Delta
\to \RR $, $i=1,\ldots ,d$, the lattice distance to the $i$\st{th}
facet (see  definition \ref{Defdistance}).  Then
 \begin{equation}
   \label{eq:1.2}
     \langle s_{k}, s_{k} \rangle = \frac{1}{c_{k}}
       \left( \phi^* \exp \left(N \sum_{i=1}^d \ell_i \left(\frac{k}{N}\right)
         \Log \ell_i - \ell_i \right) \right)
 \end{equation}
where $c_{k}$ is the integral of the expression in
parentheses.

The measure, $\mu_N$, is $K$-invariant, so it is completely
determined by its push-forward to $X/K$.  Moreover, $\Phi$ is
also $K$-invariant, so it defines a map $X/K \to \Delta$ which
for toric varieties is a bijection.  Hence to study the
asymptotics of $\mu_N$ it suffices to study the asymptotics of
the measure
\begin{displaymath}
  \mu^{\sharp}_N =: \Phi_* \mu_N \, .
\end{displaymath}
Moreover, for toric varieties $\Phi_* \nu$ is just ordinary
Lebesgue measure on $\Delta$, hence by (\ref{eq:1.2})
$\mu^{\sharp}_N$ is the measure
\begin{equation}
  \label{eq:1.3}
  \mu^{\sharp}_N = 
  \sum_{k \in \ND} \frac{1}{c_{k}} \exp \left(
    N \sum_{i=1}^d \ell_i \left( \frac{k}{N}\right) \Log \ell_i
    -\ell_i \right) \, dx \, .
\end{equation}
For $x$ and $y$ in $\Delta$ and $N\in \ZZ_+$ let
\begin{eqnarray}
  \label{eq:1.4}
  K_N (x,y) &=& c_N (x)^{-1} \exp \left( N \sum_{i=1}^d \ell_i (x) \Log
    \ell_i (y) - \ell_i (y) \right)\\
\noalign{\hbox{where}} \nonumber\\
  \label{eq:1.5}
c_N (x) &=& \int_{\Delta} \exp \left( N \sum_{i=1}^d \ell_i (x) \Log
     \ell_i (y) - \ell_i (y) \right) \, dy \, .
\end{eqnarray}
Then, for $f \in \Cinf (\Delta )$,
\begin{eqnarray}
  \label{eq:1.6}
  \int_{\Delta} f \, d\mu^{\sharp}_N &=& \sum_{k \in \ND}
    f^{\sharp}_N \left( \frac{k}{N} \right)\\
\noalign{\hbox{where}}\nonumber\\ 
  \label{eq:1.7}
f^{\sharp}_N (x) &=& \int_{\Delta} K_N (x,y) f(y) \, dy \, .
\end{eqnarray}
One of the main result of this paper is the following.

\begin{theorem}\label{th:1.1}
  There exist, for $i=0,1,2,\ldots ,$ differential operators, $P_i
  (x,D) : \Cinf (\Delta ) \to \Cinf (\Delta )$, of order $2i$
  with the property
  \begin{equation}
    \label{eq:1.8}
    \int_{\Delta} K_N (x,y) f(y) \, dy \sim \sum_{i=0}^\infty P_i (x,D) f\,  N^{-i}\, . 
  \end{equation}
Moreover, $P_0 =I$.
\end{theorem}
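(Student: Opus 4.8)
The plan is to read \eqref{eq:1.4}--\eqref{eq:1.5} as a normalized Laplace kernel and to derive \eqref{eq:1.8} by the Laplace method (Gaussian approximation at a non-degenerate maximum). Set
\[
  \psi_x(y) \;=\; \sum_{i=1}^d\bigl(\ell_i(x)\,\Log\ell_i(y) - \ell_i(y)\bigr),
\]
so that $K_N(x,y) = e^{N\psi_x(y)}\big/\int_\Delta e^{N\psi_x(y')}\,dy'$ and $f^\sharp_N(x) = \int_\Delta e^{N\psi_x(y)}f(y)\,dy\big/\int_\Delta e^{N\psi_x(y)}\,dy$. Writing $a_i := d\ell_i$ for the constant differential of the affine function $\ell_i$, one computes $\nabla_y\psi_x(y) = \sum_i\bigl(\ell_i(x)/\ell_i(y) - 1\bigr)a_i$ and $\nabla^2_y\psi_x(y) = -\sum_i\bigl(\ell_i(x)/\ell_i(y)^2\bigr)\,a_i\otimes a_i$. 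On $\Delta^\circ$ the Hessian is negative definite, because the normals $a_i$ span $\RR^n$ (Delzant), so $\psi_x$ is strictly concave there, and its gradient vanishes exactly at $y=x$. (Equivalently, $\psi_x(y) = \psi_x(x) - D_g(x,y)$ where $D_g$ is the Bregman divergence of the strictly convex $g(y) = \sum_i\bigl(\ell_i(y)\Log\ell_i(y) - \ell_i(y)\bigr)$, which vanishes only at $y=x$.) Thus for $x\in\Delta^\circ$ the probability measure $K_N(x,\cdot)\,dy$ concentrates at $y=x$ with a non-degenerate Gaussian profile of scale $N^{-1/2}$.

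Next I Taylor-expand $\psi_x$ at its maximum. Putting $y = x+u$ and using $\Log(1+t) = \sum_{k\ge1}(-1)^{k+1}t^k/k$ with $t_i = \langle a_i,u\rangle/\ell_i(x)$ gives $\psi_x(x+u) - \psi_x(x) = -\tfrac12 Q_x(u) + \sum_{k\ge3}r_{k,x}(u)$, where $Q_x(u) = \sum_i\langle a_i,u\rangle^2/\ell_i(x)$ is positive definite and $r_{k,x}(u) = \tfrac{(-1)^{k+1}}{k}\sum_i\langle a_i,u\rangle^k/\ell_i(x)^{k-1}$ is homogeneous of degree $k$, all coefficients being smooth in $x$ on $\Delta^\circ$. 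Substituting $u = v/\sqrt N$ (the Jacobian $N^{-n/2}$ cancels between numerator and denominator), restricting the $y$-integral to $|u|\le\delta$ at an exponentially small cost (strict concavity of $\psi_x$ away from $x$) and then extending $\sqrt N(\Delta-x)$ to all of $\RR^n$ with another $O(N^{-\infty})$ error, I reduce $f^\sharp_N(x)$ to
\[
  \frac{\int_{\RR^n} e^{-\frac12 Q_x(v)}\,
        \exp\bigl(\sum_{k\ge3}N^{1-k/2}r_{k,x}(v)\bigr)\,
        f(x+v/\sqrt N)\,dv}
       {\int_{\RR^n} e^{-\frac12 Q_x(v)}\,
        \exp\bigl(\sum_{k\ge3}N^{1-k/2}r_{k,x}(v)\bigr)\,dv}
  \;+\; O(N^{-\infty}).
\]
Expanding $\exp(\sum_{k\ge3}N^{1-k/2}r_{k,x}(v))$ and $f(x+v/\sqrt N)$ as asymptotic power series in $N^{-1/2}$, integrating term by term against the Gaussian $e^{-\frac12 Q_x(v)}$ by Wick's formula in terms of $Q_x^{-1}$, and dividing the two resulting series produces an asymptotic expansion of $f^\sharp_N(x)$; polynomial growth of the $r_{k,x}$ against Gaussian decay (on $|v|\le N^\epsilon$) supplies the remainder bounds, uniformly on compact subsets of $\Delta^\circ$.

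It then remains to check the three assertions about the $P_i$. A monomial $v^\alpha$ has non-zero Gaussian integral only when $|\alpha|$ is even; a product of anharmonic factors $r_{k_1,x},\dots,r_{k_p,x}$ paired with an order-$m_0$ derivative of $f$ contributes $N^{\,p - \frac12(\sum_j k_j + m_0)}$ with $\sum_j k_j + m_0 = |\alpha|$ even, hence an \emph{integer} power of $N$ (and likewise for the denominator, whose leading coefficient $\int_{\RR^n} e^{-\frac12 Q_x(v)}\,dv = (2\pi)^{n/2}(\det Q_x)^{-1/2}$ is non-zero); so the quotient is a series in $N^{-1}$ whose coefficients depend on $f$ only through finitely many of its derivatives, i.e.\ differential operators $P_i(x,D)$ with coefficients smooth in $x$ on $\Delta^\circ$. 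The same count gives $|\alpha| = 2(p+i)$ at order $N^{-i}$, whence $m_0 \le 2i - p \le 2i$, with $m_0 = 2i$ only when $p=0$; the resulting top-order term is the non-vanishing ``heat'' operator $\tfrac1{i!}\bigl(\tfrac12\sum_{a,b}(Q_x^{-1})_{ab}\,\partial_a\partial_b\bigr)^i$, so $P_i$ has order exactly $2i$. At order $N^0$ only $p=m_0=0$ survives, giving $P_0 f = f(x)$ -- as forced anyway by $\int_\Delta K_N(x,y)\,dy = 1$.

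The main obstacle is making this rigorous uniformly up to $\partial\Delta$: if $\ell_1(x)=0$ then $\psi_x$ is still smooth near $x$ but $\nabla_y\psi_x(x) = -a_1 \ne 0$, so its maximum over $\Delta$ sits on the face through $x$ and the concentration of $K_N(x,\cdot)$ becomes anisotropic -- scale $N^{-1}$ transverse to the facets vanishing at $x$, scale $N^{-1/2}$ along them -- while the quadratic form $Q_x$ and the coefficients $r_{k,x}$ above become singular. One therefore proves \eqref{eq:1.8} uniformly on compact subsets of $\Delta^\circ$, which is exactly what the subsequent applications to $\mu^\sharp_N$ require since $\partial\Delta$ is Lebesgue-null, and handles the boundary faces by an analogous Laplace/Watson-type analysis; organizing this face-by-face bookkeeping, and checking that it assembles into operators sensibly defined on all of $\Delta$, is the delicate part of the argument.
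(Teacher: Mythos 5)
Your interior argument is sound and reaches the right answer there, but it stops short of the actual content of Theorem~\ref{th:1.1}. The theorem asserts that the $P_i$ are differential operators $\Cinf(\Delta)\to\Cinf(\Delta)$ and that the expansion (\ref{eq:1.8}) holds on all of $\Delta$ --- uniformly up to and including the boundary --- and this is exactly the part you defer as ``the delicate part of the argument.'' Your fallback, that uniformity on compact subsets of $\Delta^\circ$ suffices for the applications because $\partial\Delta$ is Lebesgue-null, is not correct: the Euler--Maclaurin formula (\ref{eq:5.5}) applied to $f^\sharp_N$ in \S\ref{sec:5} produces, after differentiating in $h$, integrals over the faces of $\Delta$, so one needs smooth coefficients up to the boundary; the Riemann sum (\ref{eq:1.6}) contains $O(N^{n-1})$ lattice points with $k/N$ within $O(1/N)$ of $\partial\Delta$, which cannot be discarded in an all-orders expansion; and the second main theorem is explicitly claimed for $k\in\partial\Delta$. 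Moreover your own diagnosis shows why the Gaussian rescaling $u=v/\sqrt N$ cannot be patched locally: the quadratic form $Q_x$ and the Wick coefficients built from $Q_x^{-1}$ degenerate as $x\to\partial\Delta$ (the concentration scale transverse to a facet crosses over from $N^{-1/2}$ to $N^{-1}$), so smoothness of the $P_i$ across $\partial\Delta$ does not follow from the interior computation plus a separate Watson-type analysis on each face; one would still have to match the two regimes.

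The paper avoids the rescaling entirely. Near a vertex it puts the phase in the normal form (\ref{eq:4.4}) and proves the identity (\ref{eq:4.7}),
\begin{equation*}
x_j-y_j \;=\; \sum_i g_{i,j}(x,y)\, y_i\,\frac{\partial\varphi}{\partial y_i},
\end{equation*}
with $g_{i,j}$ smooth in a full neighborhood of $x=y=0$, i.e.\ up to the boundary. Combining this with the Taylor decomposition (\ref{eq:4.9}) and the substitution $e^{N\varphi}\partial\varphi/\partial y_i = N^{-1}\partial_{y_i}e^{N\varphi}$, each integration by parts gains a full power of $N^{-1}$, and the explicit factor $y_i$ kills all boundary terms. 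Iterating gives (\ref{eq:4.17})--(\ref{eq:4.18}) with coefficient functions that are manifestly smooth on all of $\Delta_p$, and the localization Theorem~\ref{th:4.2} (your exponential-tail step) reduces the general case to this one by a partition of unity. This integration-by-parts mechanism is the missing idea in your proposal; without it, or some substitute of equal strength, you have proved the theorem only on $\Delta^\circ$.
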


The $P_i$'s are \emph{combinatorial} invariants of the
polytope $\Delta$ (albeit given by rather
complicated formulas).  By combining this result with an
Euler--Maclaurin formula for Riemann sums over polytopes (see
\cite{8}), we will be able to write the sum (\ref{eq:1.3}) as an
asymptotic series in inverse powers of $N$ in which the
individual terms are integrals over the faces of $\Delta$ of
differential expressions in~$f$.  Moreover, if
$k$ is a lattice point of $\Delta$ and $\mu_{N,k}$ is the measure
\[
\mu_{Nk} = \inner{s_{Nk}}{s_{Nk}}\,\nu
\]
the formula (\ref{eq:1.8}) yields as a corollary a second main result
of this paper:

\begin{theorem}
For $f\in C^\infty(\Delta)$ one has an asymptotic expansion
\[
\int_X\phi^*f\ d\mu_{Nk} \sim 
\Bigl(\sum_{i=0}^\infty P_i(x,D)f\ N^{-i}\Bigr)|_{x=k}
\]
where the $P_i$ are the same operators as before.
\end{theorem}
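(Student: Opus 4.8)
The plan is to deduce the second theorem from Theorem \ref{th:1.1} essentially by unwinding the definition of $\mu_{Nk}$ and matching it to the kernel $K_N(x,y)$. First I would recall from (\ref{eq:1.2}) that the single normalized eigensection $s_{Nk}$ satisfies
\[
\inner{s_{Nk}}{s_{Nk}} = \frac{1}{c_{Nk}}\left(\phi^*\exp\Bigl(N\sum_{i=1}^d \ell_i\bigl(\tfrac{Nk}{N}\bigr)\Log\ell_i - \ell_i\Bigr)\right)
= \frac{1}{c_{Nk}}\left(\phi^*\exp\Bigl(N\sum_{i=1}^d \ell_i(k)\Log\ell_i - \ell_i\Bigr)\right).
\]
Pushing forward by $\Phi$ and using that $\Phi_*\nu$ is Lebesgue measure on $\Delta$, the measure $\Phi_*\mu_{Nk}$ on $\Delta$ has density $y\mapsto c_{Nk}^{-1}\exp\bigl(N\sum_i \ell_i(k)\Log\ell_i(y)-\ell_i(y)\bigr)$, which is precisely $K_N(k,y)$ with $c_{Nk}=c_N(k)$ in the notation of (\ref{eq:1.4})--(\ref{eq:1.5}) — here $x=k$ is a \emph{fixed} lattice point of $\Delta$ rather than a summation variable $k/N$. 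Hence for $f\in\Cinf(\Delta)$,
\[
\int_X \phi^*f\ d\mu_{Nk} = \int_\Delta f(y)\, K_N(k,y)\, dy = f^\sharp_N(x)\big|_{x=k}.
\]

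The second step is to invoke Theorem \ref{th:1.1}: the right-hand side $f^\sharp_N(x) = \int_\Delta K_N(x,y)f(y)\,dy$ has the asymptotic expansion $\sum_{i\ge 0}P_i(x,D)f\cdot N^{-i}$, and this expansion is in $\Cinf(\Delta)$ — that is, it holds together with all $x$-derivatives, uniformly on compact subsets. Therefore one may legitimately evaluate the expansion at the fixed interior point $x=k$, obtaining
\[
\int_X \phi^*f\ d\mu_{Nk} \sim \Bigl(\sum_{i=0}^\infty P_i(x,D)f\ N^{-i}\Bigr)\Big|_{x=k},
\]
with the same operators $P_i$ as before. The only point requiring a word of care is the \emph{uniformity} of the asymptotic expansion in (\ref{eq:1.8}) in the parameter $x$: Theorem \ref{th:1.1} as stated gives an asymptotic expansion in $\Cinf(\Delta)$, so the remainder after $M$ terms is $O(N^{-M-1})$ in every $\Cinf$-seminorm, in particular in the sup-norm on $\Delta$; evaluating at a single fixed point is then harmless. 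I expect this uniformity bookkeeping to be the only genuine obstacle, and it is a mild one — it is already built into the meaning of the symbol $\sim$ used in Theorem \ref{th:1.1}, so the corollary is essentially immediate once the identification $\Phi_*\mu_{Nk} = K_N(k,\cdot)\,dy$ is made explicit.
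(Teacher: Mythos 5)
Your reduction is exactly the paper's: formula (\ref{eq:1.2}) applied to the weight $Nk$ gives $\Phi_*\mu_{Nk}=K_N(k,\cdot)\,dy$, hence $\int_X\phi^*f\,d\mu_{Nk}=f^\sharp_N(k)$ with $c_{Nk}=c_N(k)$, and the theorem is then Theorem \ref{th:1.1} evaluated at $x=k$; the uniformity-in-$x$ point you raise is indeed the only thing to check, and it is supplied by the remainder estimates of \S\ref{sec:4}. The one real omission is that you evaluate ``at the fixed \emph{interior} point $x=k$'': the paper explicitly advertises this corollary as covering $k\in\partial\Delta$ as well --- that is its stated reason for giving a direct proof rather than quoting \S 7 together with \cite{BPU}, which handle only interior $k$. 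Your argument does extend to boundary lattice points, but you must say why: Theorem \ref{th:1.1} is an expansion on the \emph{closed} polytope, because the stationary-phase analysis of \S\ref{sec:4} is carried out in vertex coordinates on the closed orthant (Lemma \ref{lem:4.3}), the localization for $x$ in the interior of a face is Lemma \ref{lem:4.2}, and the integrations by parts produce no boundary terms thanks to the factors $y_i$ in the integrand; consequently the $P_i(x,D)f$ and the remainder bounds make sense and hold for $x=k$ lying on a face of $\Delta$. With the word ``interior'' deleted and that observation added, your proof is complete and coincides with the paper's.
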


In case $k$ is in the interior of $\Delta$
this result follows from the results in \S 7 and the
``matrix coefficients" estimates in \cite{BPU}.
However, we will give below a direct proof
that includes the case $k\in\partial\Delta$.

\medskip
To summarize briefly the contents of this article:  In
\S\ref{sec:2} we will review basic facts about toric varieties,
in \S\ref{sec:3} derive the formula (\ref{eq:1.2}), in
\S\ref{sec:4} prove Theorem~\ref{th:1.1} and in \S\ref{sec:5} derive
from it the asymptotic expansion mentioned above.
The asymptotic properties of $s_{k}$ that we discuss in
\S\ref{sec:4} are closely related to some results of
Shiffman--Tate--Zelditch, and can be viewed as an alternative derivation
of these results.  (See \cite{10}).  We will comment on the
relation of our work to theirs in \S6.  Also, on the open set where
$\langle s_{k}, s_{k} \rangle$ is non-zero, $-\Log
\langle s_{k}, s_{k} \rangle$ is a potential for the
K\"ahler metric on $X$, so inter alia our results give a formula
for this K\"ahler potential in terms of moment polytope data.
(For other formulas of this type see \cite{3}, \cite{4} and \cite{6}.)
Finally, in \S  7 we show that the basis of sections $\{s_k\}$
is a ``Bohr-Sommerfeld basis" in the sense of Tyurin, \cite{Ty}.

It may be worth noting that we
make no use of microlocal analysis in this paper.

\section{Toric varieties}
\label{sec:2}

Let $T$ be the standard $d$-dimensional torus, $T=(S^1)^d$, let
$\frakt=\Lie T =\RR^d$ and let $e_1,\ldots ,e_d$ be the standard basis
vectors of $\RR^d$.  $T$ acts on $\CC^d$ by its diagonal action,
and if we equip $\CC^d$ with the K\"ahler form, $\omega =
\sqrt{-1}\,  \sum \, dz_i \wedge \, d\bar{z}_i$ this becomes a
Hamiltonian action with moment map
\begin{equation}
  \label{eq:2.1}
  \phi : \CC^d \to \frakt^* \, , \quad z \to \sum |z_i |^2 e^*_i \, .
\end{equation}
For $G$ a codimension $n$ subtorus of $T$, let $\fg = \Lie G$,
let $\ZZ^*_G \subset \fg^*$ be the weight lattice of $G$ and let
\begin{equation}
  \label{eq:2.2}
  L: \frakt^* \to \fg^*
\end{equation}
be the transpose of the inclusion map, $\fg \to \frakt$.  Then the
action of $T$ on $\CC^d$ restricts to a Hamiltonian action of $G$
on $\CC^d$ with moment map
\begin{equation}
  \label{eq:2.3}
  \psi = L \circ \phi = \sum |z_i |^2 \alpha_i
\end{equation}
where $\alpha_i = L e^*_i \in \ZZ^*_G$.  We will assume that this
moment map is proper, or alternatively, that the $\alpha_i$'s are
``polarized'':  for some $\xi \in \fg$, all the numbers, $\alpha_i
(\xi)$ are positive.  The toric varieties we will be considering
in this paper are symplectic reduced spaces of the form
\begin{equation}
  \label{eq:2.4}
  X_{\alpha} = Z_{\alpha}/G
\end{equation}
where $\alpha$ is in $\ZZ^*_G$ and $Z_{\alpha} =
\psi^{-1}(\alpha)$.  We recall that the action of $G$ on
$\psi^{-1}(\alpha)$ is locally free iff $\alpha$ is a regular
value of $\psi$, and since we will only be considering
non-singular toric varieties in this paper we will assume that
$G$ acts freely on $Z_{\alpha}$.  Hence $X_{\alpha}$ is a
manifold and the projection
\begin{equation}
  \label{eq:2.5}
  \pi : Z_{\alpha} \to X_{\alpha}
\end{equation}
is a principal $G$-fibration.  From the action of $T$ on $\CC^d$
we get a Hamiltonian action of $T$ on $X_{\alpha}$, and if we
denote by ``$\iota$'' the inclusion of $Z_{\alpha}$ into $\CC^d$,
the moment map for the $T$-action on $\CC^d$ is related to the
moment map for the $T$ action on $X_{\alpha}$ by the identity:
\begin{equation}
  \label{eq:2.6}
  \phi_{\alpha} \circ \pi = \phi \circ \iota \, .
\end{equation}
Thus we have a commutative diagram:
\[
\begin{array}{cccll}
Z_\alpha &\stackrel{\iota}{\hookrightarrow} &\CC^d  & & \\
{}_\pi\downarrow \ \ & &{}_\phi\downarrow\ \  & \searrow^{\psi} &\\
X_\alpha &\overset{\phi_\alpha}{\to}&\frakt^*&\stackrel{L}{\to} &\frakg^*
\end{array}
\]
The moment polytope for the action of $T$ on
$X_{\alpha}$ is
\begin{equation}
  \label{eq:2.7}
  \Delta_{\alpha} = \RR^d \cap L^{-1} (\alpha),
\end{equation}
by (\ref{eq:2.1}) and (\ref{eq:2.3}).  (Here we've identified
$\frakt^*$ with $\RR^d$ via the basis vectors, $e^*_i$.)  The facets
of this polytope are the intersections of  $L^{-1}(\alpha)$ with
the coordinate hyperplanes $x_i=0$ in $\RR^d$.

\begin{definition}\label{Defdistance}
The ``lattice distance"
to the $i$\st{th} facet, $\ell_i$, is
the restriction of the coordinate function $x_i$ to $\Delta_{\alpha}$.
\end{definition}

Since $G$ acts trivially on $X_{\alpha}$ the action of $T$ on
$X_{\alpha}$ is effectively an action of the quotient group,
$K=T/G$, and since $\Lie{K} =\frakk=\frakt/\fg$, 
the dual $\frakk^*$ is the annihilator in $\frakt^*$
of $\fg$, and hence is the kernel of the map $L$.
To make the action of $K$ a Hamiltonian action one has to normalize
the moment map, $\phi_{\alpha}:X_{\alpha} \to L^{-1} (\alpha )$,
so that it maps into $\frakk^*$, and this one can do by fixing an
element, $c_{\alpha} \in \ZZ^d \cap \Int \Delta_{\alpha}$ and
replacing $\phi_{\alpha}$ by $\phi_{\alpha} - c_{\alpha}$.  We
won't, however, bother to make this normalization here and will
continue to think of $\phi_{\alpha}$ as a map into $L^{-1}(\alpha)$.

\section{K\"ahler reduction}
\label{sec:3}

In this section we review some general facts about K\"ahler
reduction and use them to derive the formula~(\ref{eq:1.2}).  Let
$M$ be a complex manifold and $\LL \to M$  a holomorphic line
bundle.  We recall that if $\LL$ is equipped with a Hamiltonian
inner product, there is a unique holomorphic connection on $\LL$
which is compatible with this inner product.  More explicitly, if
$\triangledown$ is a holomorphic connection and $\langle \, , \,
\rangle$ an inner product then for every holomorphic
trivialization $s: U \to \LL$
\begin{equation}
  \label{eq:3.1}
  \frac{\triangledown s}{s} = \mu \in \Omega^{1,0} (U)
\end{equation}
and the compatibility of  $\langle \, , \,
\rangle$ and $\triangledown$ reduces to
\begin{eqnarray}
  \label{eq:3.2}
d \Log  \langle s,s\rangle &=& \mu + \overline{\mu}\\
\noalign{\hbox{and hence}}\\ \notag
    \label{eq:3.3}
\partial \Log  \langle s,s\rangle &=& \mu
\end{eqnarray}
which shows that the inner product determines the connection and
vice versa.  It also shows that
\begin{equation}
  \label{eq:3.4}
  \curv (\triangledown) = \sqrt{-1}\,  \partial \overline{\partial}
    \Log  \langle s,s\rangle = : -\omega \, .
\end{equation}
Suppose now that the form, $\omega$, is K\"ahler.  Let $G$ be an
$m$-dimensional torus, let $\tau : G \times M \to M$ be a
holomorphic action of $G$ on $M$ and let $\tau^{\sharp} :G \times
\LL \to \LL$ be an action of $G$ on $\LL$ by holomorphic line
bundle automorphisms which is compatible with $\tau$.  If
$\tau^{\sharp}$ preserves  $\langle \, , \, \rangle$ then by
(\ref{eq:3.2})---(3.3) it preserves $\triangledown$ and
$\omega$.  Moreover, by Kostant's formula there is an
intrinsically defined  moment map, $\Phi :M \to \fg^*$, such that
\begin{equation}
  \label{eq:3.5}
  L_{v} s = \triangledown_{v_M} s + i\langle \Phi , v \rangle s
\end{equation}
for all $s \in \Cinf (\LL)$ and $v \in \fg$.  In other words
the infinitesimal action of $G$ on $\Cinf (\LL)$ is completely
determined by $\Phi$ and $\langle \, , \, \rangle$.

Let's now describe what symplectic reduction looks like from
this K\"ahlerian perspective.  Given $\alpha \in \ZZ^*_G$, let
$Z_{\alpha} = \Phi^{-1} (\alpha)$.  Assuming that $G$ acts freely
on $Z_{\alpha}$, the reduced space, $X_{\alpha} = Z_{\alpha} /G$
is a $\Cinf$ manifold, and the projection, $\pi : Z_{\alpha} \to
X_{\alpha}$ is a principal $G$-fibration.  Let $\LL_{\alpha} \to
X_{\alpha}$ be the line bundle whose fiber at $p$ is the
(one-dimensional) space of sections
\begin{displaymath}
  s: \pi^{-1}(p) \to \LL
\end{displaymath}
which transforms under $G$ by the recipe
\begin{equation}
  \label{eq:3.6}
  \tau^{\sharp} (\exp v)^* s = e^{ i \alpha (v)} s
\end{equation}
or alternatively, by (\ref{eq:3.5}), are auto-parallel along
$\pi^{-1}(p)$.  For such a section, $\langle s,s \rangle$ is
constant along $\pi^{-1}(p)$, so the inner product, $\langle \, ,
\, \rangle$, induces an inner product,  $\langle \, ,
\, \rangle_{\alpha}$ on $\LL_{\alpha}$.  In terms of sections,
if $\Cinf (\LL)^{\alpha}$ is the space of global sections of
$\LL$ which transform by (\ref{eq:3.6}) and $\iota : Z_{\alpha}
\to M$ is inclusion
\begin{eqnarray}
  \label{eq:3.7}
  \iota^* \Cinf (\LL)^{\alpha} &=& \pi^* \Cinf (\LL_{\alpha})\\
\noalign{\hbox{ and if  $ \iota^* s = \pi^* s_{\alpha}$}}\nonumber\\
  \label{eq:3.8}
  \iota^*  \langle s , s \rangle, 
      &=& \pi^* \langle s_{\alpha} , s_{\alpha} \rangle_{\alpha} \, .
\end{eqnarray}

We now define a complex structure on $X_{\alpha}$ and make
$\LL_{\alpha} \to X_{\alpha}$ into a holomorphic line bundle by
requiring that
\begin{eqnarray}
  \label{eq:3.9}
  \pi^* \O_{\alpha}  &\subseteq & \iota^* \O\\
\noalign{and} \nonumber\\
  \label{eq:3.10}
\pi^* \L_{\alpha} & \subseteq & \iota^* \L
\end{eqnarray}
where $\O$ is the sheaf of holomorphic functions on $M$,
$\O_{\alpha}$ the sheaf of holomorphic functions on $X$, $\L$ the
sheaf of holomorphic sections of $\LL$ and
$\L_{\alpha}$ the sheaf of holomorphic sections of $\LL_{\alpha}$.
By (3.3) one gets a holomorphic connection,
$\triangledown_{\alpha}$, on $\LL_{\alpha}$ which is compatible
with $\langle \, , \, \rangle_{\alpha}$ and by (\ref{eq:3.4})
and (\ref{eq:3.8})
\begin{equation}
  \label{eq:3.11}
  \iota^* \curv (\triangledown) 
     = \pi^* \curv (\triangledown_{\alpha}) \, .
\end{equation}
Thus $-\curv  (\triangledown_{\alpha})$ is the reduced symplectic
form on $X_{\alpha}$.

We'll conclude this section by applying these general
observations to the set-up in \S\ref{sec:2}.

Let $\LL = \CC \times \CC^d \to \CC^d$ be the trivial line bundle
over $\CC^d$ and $s: \CC^d \to \LL$ the trivial section, $s(z) =
(z,1)$.  If we equip $\LL$ with the Hermitian inner product,
$\langle s,s \rangle = e^{-|z|^2}$, we get a non-trivial
connection on $\LL$, and by (\ref{eq:3.4}) the curvature form of
this connection is minus the symplectic form, $\sqrt{-1}\,  \sum
dz_i \wedge d\overline{z}_i$.  If we let $G$ act on $\LL$ by
requiring that $s$ be $G$-invariant then the space $\Gamma_{\hol}
(\LL)^{\alpha}$ is spanned by monomials, $z^{k_1}_1 \ldots
z^{k_d}_d$, for which $\alpha = L (\sum k_i e^*_i) = \sum k_i
\alpha_i$, i.e.,~for which $k \in \ZZ^d \cap
\Delta_{\alpha}$.  For each of these sections let $s_{k}$ be
the corresponding holomorphic section of $\LL_{\alpha}$.  On
$Z_{\alpha}$ one has
\begin{displaymath}
  \langle z^{k},z^{k}\rangle 
     = |z_1|^{2k_1} \ldots |z_d|^{2k_d} e^{-|z|^2},
\end{displaymath}
and so by (\ref{eq:3.8}) and  (\ref{eq:2.1})
\begin{displaymath}
  \pi^* \langle s_{k},s_{k}\rangle _{\alpha}
    = \iota^* |z_1 |^{2k_1} \cdots |z_d|^{2k_d}
        e^{-|z|^2} =  \iota^*\phi^* (x^{k_1}_1\ldots x^{k_d}_d e^{-\sum x_i}).
\end{displaymath}
But, by (\ref{eq:2.6})
\[
\iota^* \phi^* (x^{k_1}_1\ldots x^{k_d}_d  e^{-\sum x_i}) = 
\pi^* \phi^*_{\alpha}  (\ell^{k_1}_1\ldots \ell^{k_d}_d   e^{-\sum \ell_i}).
\]
%
Hence we conclude that
\begin{equation}
  \label{eq:3.12}
  \langle s_{k},s_{k}\rangle_\alpha = \phi^*_{\alpha} \Bigl(
     \ell^{k_1}_1\ldots \ell^{k_d}_d e^{-\sum \ell_i}\Bigr),
\end{equation}
which implies the formula (\ref{eq:1.2}).

\section{Asymptotics}
\label{sec:4}

In this section we will use stationary phase  to
analyze the behavior of the integral (\ref{eq:1.7}) as $N$ tends
to infinity.  Let
\begin{equation}
  \label{eq:4.1}
  \varphi (x,y) = \sum \ell_i (x) \Log \ell_i (y)-\ell_i(y)
\end{equation}
be the phase function in this integral.  We claim:

\begin{lemma}
  \label{lem:4.1}
For $x$ a fixed point in the interior of $\Delta$, the function $\varphi$, regarded as a function of
$y$, has a unique critical point at $x=y$, and this critical
point is the unique global maximum of the function, $\varphi$, on
$\Delta$.
\end{lemma}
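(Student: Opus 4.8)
The plan is to compute the gradient and Hessian of $\varphi(x,\cdot)$ directly from the explicit formula \eqref{eq:4.1}, using the fact that each $\ell_i$ is an affine function on $\Delta$, together with the global concavity that $\varphi$ inherits from the strict concavity of $t\mapsto t\log t - t$. First I would write $\ell_i(y) = \langle v_i, y\rangle + \lambda_i$ for the (primitive inward) facet normals $v_i$, so that $d_y\ell_i = v_i$ is constant. Differentiating \eqref{eq:4.1} in $y$ gives
\begin{equation}
  \label{eq:grad}
  d_y\varphi(x,y) = \sum_{i=1}^d \left(\ell_i(x)\,\frac{1}{\ell_i(y)} - 1\right) v_i\, .
\end{equation}
A critical point is a $y$ where this vanishes. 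I claim $y=x$ works: at $y=x$ each coefficient $\ell_i(x)/\ell_i(x) - 1$ is zero (note $\ell_i(x)>0$ since $x$ is interior), so $d_y\varphi(x,x) = 0$. This shows $x$ is a critical point; the content is uniqueness and that it is the global max.

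For uniqueness and the maximum property I would invoke concavity. The function $g(t) = t\log t - t$ is strictly convex on $(0,\infty)$ with $g'(t) = \log t$; hence $y\mapsto \varphi(x,y) = \sum_i \ell_i(x)\log\ell_i(y) - \ell_i(y)$ is a sum of terms $\ell_i(x)\log\ell_i(y)$ (strictly concave in $y$ along any direction not parallel to the facet, since $\log$ is strictly concave and $\ell_i$ is affine) minus $\ell_i(y)$ (affine in $y$), so $\varphi(x,\cdot)$ is concave on $\Delta$, and strictly concave provided the $v_i$ span $\RR^n$ — which they do, since $\Delta$ is a bounded polytope (a Delzant polytope, in fact). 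A strictly concave function on the convex body $\Delta$ has at most one critical point in the interior, and if it has one it is the unique global maximum. Since we exhibited $y=x$ as an interior critical point, it is \emph{the} critical point and the global maximum. One should also check the boundary does not host the supremum: as $y\to\partial\Delta$ some $\ell_i(y)\to 0$, and since $\ell_i(x)>0$ the term $\ell_i(x)\log\ell_i(y)\to -\infty$ while the remaining terms stay bounded, so $\varphi(x,y)\to-\infty$ near the part of $\partial\Delta$ where that facet is active; thus the supremum is attained in the interior and equals the value at $y=x$.

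The main obstacle is the strict concavity claim, i.e. ruling out that $\varphi(x,\cdot)$ is merely concave with a whole flat face of critical points. This is where the geometry of $\Delta$ enters: one needs that no nonzero direction $w$ is simultaneously parallel to every facet hyperplane, equivalently that $\{v_i\}$ spans, equivalently that $\Delta$ has nonempty interior and is bounded — all automatic here. Concretely, the Hessian is
\begin{equation}
  \label{eq:hess}
  d^2_y\varphi(x,y) = -\sum_{i=1}^d \frac{\ell_i(x)}{\ell_i(y)^2}\, v_i\otimes v_i\, ,
\end{equation}
a negative-semidefinite quadratic form which is negative-\emph{definite} exactly when the $v_i$ span, with all coefficients $\ell_i(x)/\ell_i(y)^2 > 0$ on the interior. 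So the Hessian at $y=x$ is negative definite, giving a nondegenerate maximum — which is also what is needed to apply stationary phase to \eqref{eq:1.7} in the next step. I would present \eqref{eq:grad}, \eqref{eq:hess}, the one-line check at $y=x$, the concavity/boundary argument, and conclude.
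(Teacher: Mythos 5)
Your proof is correct, and for the crucial step --- uniqueness of the critical point --- it takes a genuinely different route from the paper. Both arguments start from the same computation of $d_y\varphi$ and of the Hessian $-\sum_i \ell_i(x)\,(d\ell_i)^2/\ell_i(y)^2$, and both use that $\varphi(x,\cdot)\to-\infty$ at $\partial\Delta$. The paper then concludes via the ``peaks--passes'' lemma: since (\ref{eq:4.2}) forces every interior critical point to be a local maximum and the function blows down at the boundary, a second maximum would force a pass between the two, which is impossible. You instead observe that $\varphi(x,\cdot)$ is globally strictly concave on $\Delta$ (each term $\ell_i(x)\log\ell_i(y)$ is concave in $y$, and the forms $v_i\otimes v_i$ span because $\Delta$ is a bounded, full-dimensional polytope), so uniqueness of the critical point and the global-maximum property follow at once from convex analysis. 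Your route is arguably cleaner: it avoids the Morse-theoretic lemma entirely, and it isolates the spanning condition on the facet normals that makes the maximum \emph{nondegenerate} --- a fact the paper needs later (for steepest descent in \S 6) but does not spell out here. One small slip worth fixing: the one-variable model for the concavity is $t\mapsto a\log t - t$ with $a=\ell_i(x)>0$ fixed, not $t\mapsto t\log t - t$ (which is convex); your displayed gradient and Hessian are nevertheless correct, so the argument is unaffected.
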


\begin{proof}
Since
\begin{displaymath}
d\varphi = \sum \ell_i (x) \frac{d\ell_i}{\ell_i (y)}
    - d\ell_i =0
\end{displaymath}
at $x=y$, the point $x=y$ is a critical point of $\varphi$ and
since
\begin{equation}
  \label{eq:4.2}
  d^2\varphi = - \sum \ell_i (x) \frac{(d\ell_i)^2}{\ell_i (y)^2}
\end{equation}
this critical point is a maximum.  Moreover, by (\ref{eq:4.2})
every critical point of $\varphi$ in the interior of $\Delta$ has
to be a maximum and as $y$ tends to the boundary of $\Delta$,
$\varphi$ tends to $-\infty$.  Hence by the ``peaks--passes''
lemma $x$ is the only critical point of $\varphi$ and is its
unique global maximum.
\end{proof}

For $x$ in the interior of a boundary face, $F$, of $\Delta$,
one has an analogous result:

\begin{lemma}\label{lem:4.2}
The restriction of $\varphi$ to $F$ has a unique critical point at
$x=y$, and this critical point is the unique global maximum of
$\varphi$ on $\Delta$.  In addition, the derivatives 
of $\varphi$ (as a function of $y$) 
in directions normal to $F$ are not zero at $y=x$.
\end{lemma}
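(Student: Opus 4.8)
The plan is to reduce the statement on a boundary face $F$ to the interior case already handled in Lemma \ref{lem:4.1}, by observing that a face of a Delzant polytope is itself a Delzant polytope and that the function $\varphi$ restricts to the analogous phase function on that face. Concretely, suppose $F$ is the face cut out by $\ell_i(y)=0$ for $i$ in some index set $S$. For $y\in F$ those coordinates vanish identically, so the terms $\ell_i(x)\Log\ell_i(y)-\ell_i(y)$ with $i\in S$ are problematic only through $\Log\ell_i(y)$; but for $x\in\Int F$ we also have $\ell_i(x)=0$ for $i\in S$, and the convention (consistent with the formula (\ref{eq:1.2}), where $0\cdot\log 0$ is read as $0$) makes those terms vanish. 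Thus $\varphi(x,y)|_{y\in F} = \sum_{i\notin S}\ell_i(x)\Log\ell_i(y)-\ell_i(y)$, which is exactly the phase function (\ref{eq:4.1}) associated to the polytope $F$ with its own lattice-distance functions $\{\ell_i|_F\}_{i\notin S}$. Since $F$ is again a Delzant polytope, Lemma \ref{lem:4.1} applied to $F$ gives that $\varphi|_F$ has a unique critical point at $y=x$, which is the global maximum of $\varphi|_F$ on $F$.

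First I would make the above reduction precise, checking that the restricted functions $\ell_i|_F$, $i\notin S$, are indeed the lattice distances to the facets of $F$ — this is a standard fact about faces of Delzant polytopes and follows from the local normal form of such a polytope near the relative interior of $F$ (in suitable coordinates $F$ looks like a lower-dimensional Delzant polytope times a corner). Next I would promote "global max on $F$" to "global max on all of $\Delta$": by Lemma \ref{lem:4.1} the supremum of $\varphi(x,\cdot)$ over $\Delta$ for $x\in\Int\Delta$ is attained at $y=x$, and by continuity / a limiting argument the same formula $\varphi(x,x)=\sum_{i\notin S}\ell_i(x)\Log\ell_i(x)-\ell_i(x)$ is the sup over $\Delta$ for $x\in\partial\Delta$ as well; since $\varphi(x,y)\to-\infty$ as $y\to\partial\Delta$ transversally (the $\Log\ell_i(y)$ terms with $\ell_i(x)>0$ blow down), the maximum over $\overline\Delta$ is attained in the relative interior of some face, and the computation pins it to $y=x$.

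For the last assertion — that the normal derivatives of $\varphi(x,\cdot)$ at $y=x$ do not vanish — I would compute directly. Pick a primitive inward normal direction to $F$; in the Delzant coordinates near $\Int F$ this is (up to sign) the direction $\partial/\partial\ell_j$ for some fixed $j\in S$. Differentiating (\ref{eq:4.1}) in $y$ gives $\partial_y\varphi=\sum_i\big(\ell_i(x)/\ell_i(y)-1\big)\,d\ell_i$, so the component along $d\ell_j$ is $\ell_j(x)/\ell_j(y)-1$. At $y=x\in\Int F$ we have $\ell_j(x)=0$, so this component is $-1\neq 0$, as claimed. (One must be slightly careful that the $\ell_i$ for $i\in S$ are not all independent coordinates, but near $\Int F$ the Delzant condition guarantees that $(\ell_i)_{i\in S}$ extend to part of a coordinate system transverse to $F$, so this pairing is exactly the directional derivative.)

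The main obstacle I anticipate is bookkeeping with the $0\cdot\log 0$ terms: making rigorous sense of "$\varphi$ as a function of $y$" near and on $\partial\Delta$, and justifying that its restriction to $F$ is literally the lower-dimensional phase function rather than merely agreeing with it up to harmless constants. Everything else — the Hessian being negative definite, the peaks–passes argument, the normal-derivative computation — is then either inherited from Lemma \ref{lem:4.1} or a one-line calculation.
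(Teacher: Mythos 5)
Your proposal is correct and follows essentially the same route as the paper: drop the $i\in I$ logarithmic terms because $\ell_i(x)=0$, apply Lemma~\ref{lem:4.1} to the restriction of $\varphi$ to the face $F$ (itself a Delzant polytope), and compute that the derivative at $y=x$ is $-\sum_{i\in I}(d\ell_i)_y\neq 0$ in the directions normal to $F$. The one place you diverge is in promoting the maximum on $F$ to a maximum on all of $\Delta$: the paper simply observes that for $x\in\mathrm{Int}\,F$ the variables $\ell_i(y)$, $i\in I$, enter $\varphi$ only through $-\sum_{i\in I}\ell_i(y)$, so $\varphi$ is decreasing in them and the maximum over $\Delta$ is forced onto $F$ --- a more direct and airtight step than your continuity/limiting argument, which in particular leaves the \emph{uniqueness} of the global maximum over all of $\Delta$ somewhat underjustified.
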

\begin{proof}
Suppose that $F$ is defined by the equations $\ell_i = 0$, 
$i\in I\subset\{1,\,2,\ldots ,d\}$.  Then, for $x$ in the interior of $F$,
\begin{equation}\label{eq:new4.2}
\varphi(x,y) = \sum_{i\not\in I} 
 \ell_i (x) \Log \ell_i (y)-\sum_{i=1}^d\ell_i(y), 
\end{equation}
which shows that $\varphi$ is a decreasing function of
the $\ell_i,\ i\in I$.    The joint minimum of those functions
is exactly $F$, and therefore the global maximum of 
$\Delta\ni y\mapsto \varphi(x,y)$ is attained on $F$.
The restriction of this function to $F$ is
\[
\varphi(x,y)|_{y\in F} = \sum_{i\not\in I} 
 \ell_i (x) \Log \ell_i (y)-\sum_{i\not\in I} \ell_i.
\]
Lemma \ref{lem:4.1} can be applied to this restriction,
and therefore $y=x$ is the unique global maximum of $\varphi|_F$,
and so $x$ is the unique global maximum of $\Delta\ni y\mapsto \varphi(x,y)$
on $\Delta$.  Moreover, at $y=x$
\begin{equation}\label{eq:4diffvarphi}
(d\varphi)_y = -\sum_{i\in I} (d\ell_i)_y,
\end{equation}
which shows that the derivatives normal to $F$ 
are not zero at $x$ (and $\varphi$ decreases to the interior
of the polytope).  

\end{proof}

From these lemmas one obtains the following ``localization''
theorem for the integral operator defined by~(\ref{eq:1.8}).

\begin{theorem}
  \label{th:4.2}
  Let $f$ and $g$ be in $\Cinf (\Delta)$.  Suppose that for $x
  \in \Delta$, $f(x) \ne 0$ and $x \notin \supp g$.  Then
  \begin{equation}
    \label{eq:4.3}
    \left| \int_{\Delta} e^{N \varphi (x,y)} g(y) \, dy \right|
      \leq e^{-cN} \left| \int_{\Delta} e^{N\varphi (x,y)}
        f (y) \, dy \right|
  \end{equation}
for some positive constant, $c$.

\end{theorem}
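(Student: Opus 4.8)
The plan is to prove (\ref{eq:4.3}) by a Laplace-method comparison at the fixed point $x$: I will bound the left-hand integral from above by the mass of $e^{N\varphi(x,\cdot)}$ over a region bounded away from $y=x$, and bound the right-hand integral from below by the mass over a small neighbourhood of $y=x$. The only structural input is Lemma \ref{lem:4.1} (for $x\in\Int\Delta$) together with Lemma \ref{lem:4.2} (for $x$ in the relative interior of a proper face): these say that for \emph{every} $x\in\Delta$ the function $y\mapsto\varphi(x,y)$ attains its maximum on $\Delta$ uniquely at $y=x$. No full stationary-phase expansion is needed; crude two-sided bounds suffice. First I would record that $e^{N\varphi(x,y)}=\prod_i \ell_i(y)^{N\ell_i(x)}e^{-N\ell_i(y)}$ extends continuously to all of $\Delta$ (each factor having the limit $0$ where $\ell_i(x)>0$ and $1$ where $\ell_i(x)=0$), and that near $x$ the function $\varphi(x,\cdot)$ is itself continuous, by rewriting it as in (\ref{eq:new4.2}), where the genuinely singular logarithms carry coefficient zero. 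Put $m:=\max_{y\in\Delta}\varphi(x,y)=\varphi(x,x)$, which is finite by the lemmas.

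\emph{Upper bound for the $g$-integral.} Since $\supp g$ is compact and $x\notin\supp g$, uniqueness of the maximum gives $a:=\max_{\supp g}\varphi(x,\cdot)<m$, whence
$\bigl|\int_\Delta e^{N\varphi(x,y)}g(y)\,dy\bigr|\le\|g\|_\infty\operatorname{vol}(\Delta)\,e^{Na}=C_1\,e^{N(m-c_0)}$ with $c_0:=m-a>0$.

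\emph{Lower bound for the $f$-integral.} Because $f$ is continuous and $f(x)\ne0$, I would pick $\rho>0$ so that $f$ keeps the sign of $f(x)$ and $|f|\ge\tfrac12|f(x)|$ on $B:=\{y\in\Delta:|y-x|<\rho\}$, and split $\int_\Delta=\int_B+\int_{\Delta\setminus B}$. On the compact set $\Delta\setminus B$, which does not contain $x$, one has $b:=\max_{\Delta\setminus B}\varphi(x,\cdot)<m$, so that term is $O(e^{Nb})$. For the $B$-term, fix $\delta$ with $0<\delta<\min(c_0,m-b)$; by continuity of $\varphi(x,\cdot)$ at $x$ and full-dimensionality of $\Delta$, the set $U:=\{y\in B:\varphi(x,y)>m-\delta\}$ has positive Lebesgue measure $\kappa$, and since $e^{N\varphi}\ge0$ while $f$ has constant sign on $B$ there is no cancellation:
\[
\Bigl|\int_B e^{N\varphi(x,y)}f(y)\,dy\Bigr|=\int_B e^{N\varphi(x,y)}|f(y)|\,dy\ \ge\ \tfrac12|f(x)|\,\kappa\,e^{N(m-\delta)}.
\]
As $m-\delta>b$, the $B$-term dominates for $N$ large, giving $\bigl|\int_\Delta e^{N\varphi(x,y)}f(y)\,dy\bigr|\ge\tfrac14|f(x)|\,\kappa\,e^{N(m-\delta)}$.

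Dividing the two bounds yields a ratio $\le C_3\,e^{-N(c_0-\delta)}$, which is $\le e^{-cN}$ with $c:=(c_0-\delta)/2>0$ once $N$ is large enough to absorb the constant $C_3$; this is exactly (\ref{eq:4.3}) (valid for $N\gg0$, which is all the later sections require). The step I expect to demand the most care is the lower bound on the $f$-integral when $x\in\partial\Delta$: there $\varphi(x,\cdot)$ has logarithmic singularities at the distant facets, so continuity near $x$ must be extracted from the regular expression (\ref{eq:new4.2}), and one needs $\Delta$ full-dimensional to know $\kappa>0$; the hypothesis $f(x)\ne0$ is precisely what guarantees $f$ has one sign near $x$ and hence prevents cancellation in $\int_B e^{N\varphi}f$.
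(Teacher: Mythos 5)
Your argument is correct and is essentially the proof the paper intends: the paper states Theorem \ref{th:4.2} without proof, asserting only that it follows from Lemmas \ref{lem:4.1} and \ref{lem:4.2}, and your Laplace-type comparison (upper bound on $\supp g$ away from the unique maximum at $y=x$, lower bound from a positive-measure neighbourhood of $x$ where $f$ has constant sign and $\varphi(x,\cdot)$ is continuous via (\ref{eq:new4.2})) is exactly the missing deduction. Your caveat that the estimate holds for $N$ large is the standard reading and is all that is used later.
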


We will now examine the local behavior of the transform
(\ref{eq:1.7}) in the neighborhood of a fixed vertex, $p$, of
$\Delta$.  Let $\Delta_p$ be the open subset of $\Delta$ obtained
by deleting from $\Delta$ all facets except the facets containing
$p$.  By repagination we can assume that these are the facets,
$\ell_i =0$, $i=1,\ldots ,n$.

\begin{lemma}
  \label{lem:4.3}

There exists an affine transform mapping $\Delta_p$ onto an open
subset of the positive orthant, $\RR^n_+$, mapping $p$ onto the
origin and transforming the $\ell_i$'s, $i=1,\ldots ,n$, into the
coordinate functions, $x_i$, $i=1,\ldots ,n$.

\end{lemma}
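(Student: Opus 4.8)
The plan is to exhibit the affine transform explicitly, as the restriction of the map $(x_1,\ldots,x_n):\RR^d\to\RR^n$ to the affine hull of $\Delta=\Delta_\alpha$, and then to verify in turn that this restriction is an affine isomorphism onto $\RR^n$, that it sends $p$ to the origin and each $\ell_i$ to the $i$th coordinate function, and that it carries $\Delta_p$ onto an open subset of $\RR^n_+$.

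Recall from \S\ref{sec:2} that $\Delta=\RR^d_{\ge 0}\cap L^{-1}(\alpha)$ and that its affine hull is the $n$-dimensional affine subspace $A:=L^{-1}(\alpha)\subset\RR^d$, while by Definition~\ref{Defdistance} each $\ell_i$ is the restriction to $\Delta$ of the affine function $x_i|_A$. Put $\Lambda:=(x_1,\ldots,x_n)|_A:A\to\RR^n$. Since the facets of $\Delta$ containing $p$ are exactly $\ell_1=\cdots=\ell_n=0$, we have $x_i(p)=0$ for $i\le n$ and $x_i(p)>0$ for $i>n$; thus $\Lambda(p)=0$, and $\Lambda$ carries each $\ell_i$ to the $i$th coordinate function by construction. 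Finally $\ell_i\ge 0$ on $\Delta$ for every $i$, so $\Lambda(\Delta_p)\subseteq\RR^n_+$.

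The crux is that $\Lambda$ is an affine isomorphism; since source and target are $n$-dimensional affine spaces, it suffices to show that its linear part $\ker L\to\RR^n$, $v\mapsto(v_1,\ldots,v_n)$, is injective. Its kernel consists of the vectors $v=(0,\ldots,0,v_{n+1},\ldots,v_d)$ with $\sum_{i>n}v_i\alpha_i=L(v)=0$, and here I would invoke the standing non-singularity hypothesis: because $G$ acts freely on $Z_\alpha$, the vertex $p$ is simple and the weights $\alpha_{n+1},\ldots,\alpha_d$ labeling its non-vanishing coordinates form a $\ZZ$-basis — a fortiori an $\RR$-basis — of $\fg^*\cong\RR^m$, so the displayed relation forces $v_{n+1}=\cdots=v_d=0$. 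Hence the linear part of $\Lambda$ is injective and $\Lambda$ is the desired affine isomorphism. I expect this step — the identification of the freeness of the $G$-action with the $\RR$-linear independence of $\alpha_{n+1},\ldots,\alpha_d$ (equivalently, of the conormals $d\ell_1,\ldots,d\ell_n$ of the facets through $p$) — to be the only one requiring genuine care; it is the standard translation of the Delzant condition at the vertex $p$, which I would spell out rather than merely cite.

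It remains to see that $\Lambda(\Delta_p)$ is relatively open in $\RR^n_+$, which is a local matter. Given $x\in\Delta_p$, the facets of $\Delta$ not passing through $p$ are deleted in forming $\Delta_p$ and stay bounded away from $x$, so near $x$ the set $\Delta_p$ coincides with $\Delta$ and the only facet inequalities active there are $\ell_i\ge 0$ for those $i\le n$ with $\ell_i(x)=0$. Under the affine isomorphism $\Lambda$ this local model of $\Delta_p$ at $x$ is carried onto the local model of $\RR^n_+$ at $\Lambda(x)$; hence $\Lambda$ maps a relative neighborhood of $x$ in $\Delta_p$ onto a relative neighborhood of $\Lambda(x)$ in $\RR^n_+$, and $\Lambda(\Delta_p)$ is open in $\RR^n_+$, as claimed.
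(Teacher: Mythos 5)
Your proof is correct. The paper itself gives no argument for Lemma~\ref{lem:4.3}: it simply labels it a ``standard fact'' and cites Delzant \cite{5}, so there is no internal proof to compare yours against. What you have written is the standard unwinding of that citation, and it checks out: the map $\Lambda=(x_1,\ldots,x_n)|_{L^{-1}(\alpha)}$ is the right explicit choice; it tautologically sends $p$ to $0$ and $\ell_i$ to the $i$th coordinate; and you correctly isolate the one nontrivial point, namely that injectivity of the linear part $\ker L\to\RR^n$ is exactly the dual formulation of the freeness of the $G$-action at the point of $Z_\alpha$ over $p$ (equivalently, $\fg\cap\mathrm{span}(e_1,\ldots,e_n)=0$, equivalently $\ker L\cap\mathrm{span}(e_{n+1}^*,\ldots,e_d^*)=0$, equivalently $\alpha_{n+1},\ldots,\alpha_d$ spanning $\fg^*$). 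Your local argument for relative openness of $\Lambda(\Delta_p)$ in the closed orthant is also right, since on $\Delta_p$ the functions $\ell_j$ with $j>n$ are strictly positive and hence locally irrelevant. The only cosmetic caveat is that only the $\RR$-linear independence of the $\alpha_i$, $i>n$, is needed here; the full $\ZZ$-basis (Delzant) condition you mention is the stronger integrality statement used elsewhere, e.g.\ in \S 7.
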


(For proof of this ``standard fact'' about moment polytopes of
toric manifolds see \cite{5}.)

In these new coordinates the phase function (\ref{eq:4.1}) takes
the form
\begin{eqnarray}
  \label{eq:4.4}
  \varphi (x,y) &=& \sum x_i \log y_i - y_i + \psi (x,y)\\
\noalign{\hbox{where}\nonumber}\\
\label{eq:4.5}
\psi (x,y) &=& \sum_{r>n} \ell_r (x) \log \ell_r (y) -\ell_r (y)
\end{eqnarray}
is a $\Cinf$ function on $\Delta_p$.  Moreover, with $x$ fixed,
the derivative of $\psi$ with respect to $y$:
\begin{displaymath}
  d\psi = \sum_{r>n} \frac{\ell_r (x)}{\ell_r (y)}
    \, d\ell_r - \, d\ell_r
\end{displaymath}
is zero at $x=y$, so
\begin{eqnarray}
  \label{eq:4.6}
  \frac{\partial \psi}{\partial y_i} (x,y)  
    &=&    \sum h_{i,j} (x,y) (x_j -y_j)\\[1ex]
\noalign{\hbox{and}}\nonumber\\
   \frac{\partial \varphi}{\partial y_i} &=& \frac{x_i - y_i}{y_i}
      +\sum_j h_{i,j} (x,y) (x_j-y_j)\nonumber\\[1ex]
      &=& \frac{1}{y_i} \sum_j (\delta_{i,j} +y_i h_{i,j})
         (x_j-y_j) \, .\nonumber
 \end{eqnarray}
Hence
\begin{equation}
  \label{eq:4.7}
  x_j-y_j = \sum g_{i,j} (x,y) y_i 
        \frac{\partial\varphi}{\partial y_i}
\end{equation}
the $g_{i,j}$'s being $\Cinf$ in a neighborhood of $x=y=0$.

Consider  now an integral of the form
\begin{equation}
  \label{eq:4.8}
  \int_{\RR^n_+} e^{N\varphi (x,y)} f(x,y)\, dy
\end{equation}
where $f$ is $\Cinf$ and supported in a neighborhood of $x=y=0$.
Let $\rho (y)$ be a $\Cinf_0$ function which is equal to one on a
neighborhood of the support of $f$.  Then
\begin{eqnarray}
  \label{eq:4.9}
  f(x,y) &=& \left(f_0 (x) + \sum (y_j - x_j) f^{\sharp}_j (x,y)\right)
     \rho (y) \, , \\
\noalign{\hbox{where}}\nonumber\\
\label{eq:4.10}
f_0 (x) &=& f(x,x)\\
\noalign{\hbox{and}}\nonumber\\
\label{eq:4.11}
f^{\sharp}_j (x,y) &=&  \int^1_0 \frac{\partial}{\partial y_j}
     f (x,x + t (y-x)) \, dt \, .
\end{eqnarray}
By (\ref{eq:4.7}) and (\ref{eq:4.9}) we can write (\ref{eq:4.8})
as the sum of the two expressions
\begin{eqnarray}
  \label{eq:4.12}
  f_0 (x) \int_{\RR^n_+} e^{N \varphi (x,y)}\rho (y) \, dy\\
\noalign{\hbox{and}}\nonumber\\
\label{eq:4.13}
     -\int_{\RR^n_+} e^{N \varphi (x,y)}
         \left(\sum_{i,j}g_{i,j}f^{\sharp}_j y_i 
            \frac{\partial  \varphi}{\partial y_i}\right)
               \rho (y) \, dy
\end{eqnarray}
and by making the substitution
\begin{displaymath}
  e^{N \varphi (x,y)} \frac{\partial \varphi}{\partial y_i}
    = \frac{1}{N} \, \frac{\partial}{\partial y_i}
       e^{N \varphi (x,y)}
\end{displaymath}
and integrating by parts with respect to $y_i$ we can rewrite
(\ref{eq:4.13}) in the form
\begin{eqnarray}
  \label{eq:4.14}
  \frac{1}{N} \int_{\RR^n_+} e^{N\varphi (x,y)}
     f_1 (x,y) \, dy \\[1ex]
\noalign{\hbox{where}}\nonumber\\
\label{eq:4.15}
   f_1 (x,y) = \sum_{i,j} \, \frac{\partial}{\partial y_i}
      (y_i f^{\sharp}_j g_{i,j} \rho (y)) \, .
\end{eqnarray}
(Notice that in integrating by parts we don't pick up boundary
terms because of the presence of the $y_i$'s in the integrand.)

From (\ref{eq:4.12}) and (\ref{eq:4.14}) we get for
(\ref{eq:4.8}) the expansion
\begin{equation}
  \label{eq:4.16}
  f_0 (x) \int_{\RR^n_+} e^{N \varphi (x,y)} \rho (y) \, dy
    + \frac{1}{N} \int_{\RR} e^{N \varphi (x,y)} f_1 (x,y)\, dy
\end{equation}
and by iteration of (\ref{eq:4.16}), an expansion
\begin{eqnarray}
  \label{eq:4.17}
  \sum^{k-1}_{i=0} f_i (x) N^{-i} \int_{\RR^n_+}
     e^{N \varphi (x,y)} \rho (y) \, dy + R_k (x)\\
\noalign{\hbox{where}}\nonumber\\
\label{eq:4.18}
  R_k (x) = N^{-k} \int_{\RR^n_+} e^{N \varphi (x,y)}
     f_k (x,y) \, dy \, .
\end{eqnarray}
(In more detail:  $f_i (x) = f_i (x,x)$ and $f_i (x,y)$ is obtained from
$f(x,y)$ by iterating $i$~times the operation (\ref{eq:4.15}).
In particular $f_i$ is a sum of derivatives of $f$ of degree less
than or equal to $2i$ with $\Cinf$ functions as coefficients.)

Finally observe that for $x$ near zero the quotient of 
\begin{displaymath}
  \int_{\Delta} e^{N \varphi (xy)} (1-\rho (y)) \, dy
\end{displaymath}
by
\begin{equation}
  \label{eq:4.19}
  \int_{\Delta} e^{N \varphi (x,y)}\, dy
\end{equation}
is of order $O (e^{-cN})$ by Theorem~\ref{th:4.2}, hence if we
divide the sum (\ref{eq:4.17}) by (\ref{eq:4.19}) and let $k$ tend
to infinity we get the asymptotic expansion (\ref{eq:1.8}).

\begin{remark}
  \label{rem:4.4}

  In the discussion above we've assumed that $f(x,y)$ is
  supported on the set, $x,y \in \Delta_p$, however, by the
  localization Theorem~\ref{th:4.2} one can always reduce to this
  case by means of a partition of unity.

\end{remark}

\section{Riemann sums}
\label{sec:5}

One of the many variants of the classical Euler--Maclaurin
formula asserts that for $f \in \Cinf_0 (\RR)$ the Riemann sum
\begin{displaymath}
  \frac{1}{N} \sum^{\infty}_{k=0} f \left( -\frac{k}{N}\right)
\end{displaymath}
differs from the Riemann integral
\begin{displaymath}
  \int^0_{-\infty} f(x) \, dx
\end{displaymath}
by an asymptotic series
\begin{equation}
  \label{eq:5.1}
  \frac{f(0)}{2N} + \sum^{\infty}_{n=1} (-1)^{n-1}
    \frac{B_n}{(2n)!} f^{(2n-1)}(0) N^{-2n}
\end{equation}
where the $B_n$'s are the Bernoulli numbers.

Recalling that
\begin{displaymath}
  \tau (s) = : \frac{s}{1-e^{-s}} = 1+ \frac{s}{2} + \sum
     (-1)^{n-1} B_n \frac{s^{2n}}{(2n)!}
\end{displaymath}
this asymptotic expansion can be written more succinctly in the form:
\begin{equation}
  \label{eq:5.2}
  \frac{1}{N} \sum^{\infty}_{k=0} f \left( -\frac{k}{N}\right)
  \sim \left( \tau \left( \frac{1}{N}\, 
      \frac{\partial}{\partial h} \right)
      \int^h_{-\infty} f (x) \, dx \right) (h=0)\, .
\end{equation}

Guillemin and Sternberg have recently announced in \cite{8} an
$n$-dimensional version of this result in which the interval,
$(-\infty ,0]$, gets replaced by a convex polytope.  In
particular for the moment polytopes associated with toric
manifolds their formula is basically a ``product'' version of the
formula above and is proved by localization arguments similar to
those we  used above to prove Theorem~\ref{th:1.1}.  Let $\Delta
\subseteq \RR^n$ be such a polytope and let $d$ be the number of
facets of $\Delta$.  Then $\Delta$ can be defined by a set of
inequalities
\begin{equation}
  \label{eq:5.3}
  \langle u_i , x \rangle \leq c_i
\end{equation}
where $c_i$ is an integer and $u_i \in (\ZZ^n)^*$ is a primitive
lattice vector which is perpendicular to the $i$\st{th} facet and
points ``outward'' from $\Delta$.  The Euler--Maclaurin formula in
\cite{8} asserts:

\begin{theorem}
  \label{th:5.1}

Let $\Delta_h$ be the polytope 
\begin{equation}
  \label{eq:5.4}
  \langle u_i ,x \rangle \leq c_i + h_i \, , \quad
     i=1,\ldots , d \, .
\end{equation}
Then for $f \in \Cinf_0 (\RR^n)$
\begin{equation}
  \label{eq:5.5}
  \frac{1}{N^n} \sum_{k \in \ZZ^n \cap N\Delta} f
  \left( \frac{k}{N} \right) \sim \left( \tau \left(
      \frac{1}{N}\, \frac{\partial}{\partial h} \right)
    \int_{\Delta_h} f(x) \, dx \right) (h=0)  
\end{equation}
where $\tau (s_1 ,\ldots ,s_d) = \tau (s_1) \ldots \tau (s_d)$.

\end{theorem}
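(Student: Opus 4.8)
The plan is to prove Theorem~\ref{th:5.1} by exactly the localization strategy used for Theorem~\ref{th:1.1}: a partition of unity reduces the $n$-dimensional Riemann sum to a product of one-dimensional Euler--Maclaurin expansions (\ref{eq:5.2}), one for each facet meeting a given vertex of $\Delta$, and the Delzant normal form of Lemma~\ref{lem:4.3} is what makes the ``product'' structure visible.

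First I would fix a partition of unity $1 = \rho_0 + \sum_p \rho_p$ on a neighborhood of $\Delta$, where $p$ runs over the vertices of $\Delta$, $\supp \rho_p$ lies in the chart $\Delta_p$ of Lemma~\ref{lem:4.3} and is so small that it meets no facet except those through $p$, and $\supp \rho_0$ is disjoint from $\partial\Delta$. Since both sides of (\ref{eq:5.5}) are linear in $f$, it suffices to treat $f_0 = \rho_0 f$ and each $f_p = \rho_p f$ separately. For $f_0$: lattice points of $N\Delta$ outside $\supp f_0$ contribute nothing, so the sum is $\frac{1}{N^n}\sum_{k\in\ZZ^n} f_0(k/N)$, which by Poisson summation equals $\int_{\RR^n} f_0\,dx + O(N^{-\infty})$; on the right-hand side $\int_{\Delta_h} f_0 = \int_{\RR^n} f_0$ is independent of $h$ for small $h$, so every $h$-derivative kills it and $\tau(\tfrac1N\tfrac{\partial}{\partial h})$ acts as the identity, and the two sides agree to infinite order. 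This localization estimate plays the role here that Theorem~\ref{th:4.2} played in \S\ref{sec:4}.

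For a vertex term $f_p$, apply the affine unimodular change of coordinates of Lemma~\ref{lem:4.3}, carrying $\Delta_p$ into $\RR^n_+$, $p$ to the origin, and the facets through $p$ to $x_1=0,\ldots,x_n=0$. After shrinking $\supp\rho_p$ the remaining facets ($i>n$) are inactive on $\supp f_p$, so
\begin{displaymath}
  \frac{1}{N^n}\sum_{k\in\ZZ^n\cap N\Delta} f_p(k/N)
   = \frac{1}{N^n}\sum_{k_1\ge 0}\cdots\sum_{k_n\ge 0}
      f_p\!\left(\frac{k_1}{N},\ldots,\frac{k_n}{N}\right).
\end{displaymath}
Now apply the one-dimensional expansion (\ref{eq:5.2}) in the variables $x_1,\ldots,x_n$ in turn, holding the remaining variables fixed at each stage. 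The $\tau$-operators in different variables commute, and the resulting asymptotic series may be multiplied, giving
\begin{displaymath}
  \frac{1}{N^n}\sum_{k\in\ZZ^n\cap N\Delta} f_p(k/N)
   \sim \left( \prod_{i=1}^n \tau\!\left(\tfrac1N\tfrac{\partial}{\partial h_i}\right)
        \int_{\Delta_h} f_p(x)\,dx \right)\!(h=0).
\end{displaymath}
For $i>n$ the $i$th facet stays away from $\supp f_p$ for small $h$, so $\int_{\Delta_h} f_p$ is independent of $h_i$ there; hence inserting the trivial factors $\tau(\tfrac1N\tfrac{\partial}{\partial h_i}) = I$ for $i>n$ turns this into precisely the right-hand side of (\ref{eq:5.5}) for $f_p$. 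The right-hand side being manifestly independent of the chart used, summing over $p$ and adding the $f_0$ contribution yields the theorem.

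The main obstacle is the bookkeeping in the iterated one-dimensional step. One must verify that (\ref{eq:5.2}) holds with error terms uniform in the frozen variables and in $x$ near the vertex, so the $n$ successive expansions compose into a single honest asymptotic series; and one must make precise the operator $\tau(\tfrac1N\tfrac{\partial}{\partial h})$ by expanding $\tau$ as a formal power series, so that it becomes $\sum_m N^{-m}$ times a constant-coefficient differential operator of order $m$ in $h$, applied to the function $h\mapsto \int_{\Delta_h} f$ and evaluated at $h=0$. That function is smooth near $h=0$ because the Delzant condition, hence the combinatorial type of $\Delta_h$, is stable under small shifts of the facets, so $\Delta_h$ varies smoothly and the integral of the fixed smooth $f$ over it does too.
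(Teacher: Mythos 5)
Your proposal is correct, but note that the paper itself offers no proof of Theorem~\ref{th:5.1}: it quotes the result from Guillemin--Sternberg \cite{8}, remarking only that for Delzant polytopes it is a ``product'' version of the one-dimensional formula (\ref{eq:5.2}) proved by localization arguments like those in \S\ref{sec:4}. Your sketch --- partition of unity subordinate to the vertex charts $\Delta_p$, Poisson summation for the interior piece, the unimodular normal form of Lemma~\ref{lem:4.3}, and iterated one-dimensional Euler--Maclaurin with the trivial factors $\tau(\tfrac1N\tfrac{\partial}{\partial h_i})=I$ for the inactive facets --- is a faithful and correctly flagged implementation of exactly that strategy.
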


Now notice that if we divide (\ref{eq:1.6}) by $N^n$ the right
hand side is exactly a Riemann sum of the form above.  Hence if
we plug in for $f^{\sharp}_N$ the asymptotic expansion
(\ref{eq:1.8}) and apply to each summand the
formula~(\ref{eq:5.5}) we obtain an ``Euler--Maclaurin formula''
for the asymptotics of the measure $\mu_N$.

\section{The Shiffman--Tate--Zelditch results}
\label{sec:6}

Let $\varphi (x,y)$ be the function (\ref{eq:4.1}).  By
(\ref{eq:1.2})
\begin{equation}
  \label{eq:6.1}
  \langle s_k , s_k \rangle (p) = \frac{1}{c_k(x)}\; e^{N\varphi (x,y)}
\end{equation}
where $x= k/N$, $p \in \Phi^{-1} (y)$ and
\begin{equation}
  \label{eq:6.2}
  c_k(x) = \int_{\Delta} e^{N \varphi (x,y)} \, dy \, .
\end{equation}
If $x \in {\rm Int}\,  \Delta$ then by Lemma~\ref{lem:4.1} the
function
\begin{displaymath}
\Delta \ni  y  \mapsto \varphi (x,y)
\end{displaymath}
has a unique non-degenerate maximum at $y=x$, and hence by the
lemma of steepest descent
\begin{equation}
  \label{eq:6.3}
  c_k(x)  = \left( \frac{2 \pi}{N}\right)^{n/2} h(x)^{-\frac12}
      e^{N\varphi(x,x)} (1+0 (N^{-1}))
\end{equation}
where $h(x)$ is the determinant of the quadratic form
\begin{equation}
  \label{eq:6.4}
  \sum \frac{1}{\ell_i (x)} (d\ell_i)^2 (x) \, .
\end{equation}
Thus as $k=Nx$ tends to infinity along the ray through $x$ one
gets the asymptotic identity
\begin{equation}
  \label{eq:6.5}
  \langle s_k , s_k \rangle (p) \sim 
      \left( \frac{N} {2 \pi}\right)^{n/2} h(x)^{1/2}
         e^{N (\varphi (x,y)-\varphi (x,x))}
\end{equation}
at $p\in \Phi^{-1} (y)$.  In particular, as $N$ tends to infinity
$ \langle s_k , s_k \rangle$ concentrates exponentially on the
Bohr--Sommerfeld set, $\Phi^{-1} (k/N)$.  This result is due to
Shiffman, Tate and Zelditch,
\footnote{More or less:  Their result
  involves a slightly different choice of inner product on the
  $s_k$'s and of coordinates on $X$.  See \cite{10}.} 
who also observe that by applying steepest descent arguments
to the function (\ref{eq:new4.2}) one gets an analogue of (\ref{eq:6.5})
for $x$ lying in the interior of a face, $F$, of $\Delta$.  In this case the
asymptotic dependence of $\inner{s_k}{s_k}(p)$ on $N$ is given
by an expression similar to (\ref{eq:6.5}), except that the ``$n$"
in (\ref{eq:6.5}) has to be replaced by the dimension of $F$.
Hence the behavior of $\inner{s_k}{s_k}(p)$ for $k = Nx$ is
very non-uniform in $x$ when $x$ is near the boundary of $\Delta$.
We will prove below that by averaging their result over an ``$\delta$-pinched"
neighborhood
\[
\Bigl| \frac{k}{N}- y\Bigr| < \frac{1}{N^\delta},\qquad 0<\delta<\frac{1}{2}
\]
one gets a version of (\ref{eq:6.5}) which is much more uniform in $k/N$:

\begin{theorem}
For $x = k/N\in\Delta$, $\delta\in(0,1/2)$, and for every test function $\psi\in C_0^\infty(\bbR^n)$,
\begin{equation}\label{eq:6.7}
\int_\Delta \langle s_k\,,\,s_k\rangle\ 
\psi\Bigl( N^\delta \Bigl(\frac{k}{N}-y\Bigr)\Bigr)\ dy\  \sim\ 
\sum_{i=0}^\infty \sigma_i(x)\, N^{-(1+2\delta) i}\ ,
\end{equation}
the $\sigma_i(x)$ being $C^\infty$ functions on $\Delta$.
Thus the averaged estimate, unlike the pointwise estimate (\ref{eq:6.5}), is
``uniform up to the boundary".
\end{theorem}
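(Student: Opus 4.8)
The plan is to reduce the averaged integral to a transform of exactly the type analyzed in Section~\ref{sec:4}, and then read off the expansion from Theorem~\ref{th:1.1}. By the localization Theorem~\ref{th:4.2}, only a neighborhood of $y=x$ contributes to both the numerator and the normalizing integral $c_k(x)$ up to errors of order $O(e^{-cN})$; in particular the test function $\psi\bigl(N^\delta(x-y)\bigr)$, which is concentrated in a ball of radius $N^{-\delta}$ about $x$, is supported in that neighborhood for $N$ large. So it suffices to work with the vertex model of Section~\ref{sec:4}: after the affine change of coordinates of Lemma~\ref{lem:4.3} we may take $x$ near the origin of $\RR^n_+$ and write $\varphi(x,y)=\sum x_i\log y_i - y_i + \psi(x,y)$ as in \eqref{eq:4.4}. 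Then, using \eqref{eq:6.1},
\[
\int_\Delta \langle s_k,s_k\rangle\,\psi\bigl(N^\delta(x-y)\bigr)\,dy
= \frac{1}{c_k(x)}\int_{\RR^n_+} e^{N\varphi(x,y)}\,\psi\bigl(N^\delta(x-y)\bigr)\,dy .
\]

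The key step is to rescale. Substituting $y = x + N^{-\delta}u$ converts the integral into $N^{-n\delta}\int e^{N\varphi(x,\,x+N^{-\delta}u)}\psi(-u)\,du$, and the exponent Taylor-expands as $N\varphi(x,x) - \tfrac12 N^{1-2\delta}\, Q_x(u) + N^{1-3\delta}(\cdots)+\dots$, where $Q_x$ is the positive-definite quadratic form \eqref{eq:6.4} coming from $d^2\varphi$ at $y=x$ (this is where $0<\delta<1/2$ enters: the quadratic term dominates, the cubic and higher corrections carry strictly negative powers $N^{1-3\delta},N^{1-4\delta},\dots$, and since $1-3\delta$ may be positive we must expand the exponential of those terms and collect; each extra order costs a factor $N^{-(1-2\delta)}\cdot N^{-?}$... more cleanly, one reorganizes so that the natural small parameter is $N^{-(1-2\delta)}$ times a fixed Gaussian integral). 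Equivalently, and this is the cleanest route, I would not rescale at all but instead apply the machinery of Section~\ref{sec:4} verbatim: the function $(x,y)\mapsto\psi\bigl(N^\delta(x-y)\bigr)$ is smooth in $y$, equals $\psi(0)$ on the diagonal, and its $y$-derivatives of order $j$ are $O(N^{j\delta})$. Running the iteration \eqref{eq:4.9}--\eqref{eq:4.18} with this $f$ in place of a fixed test function, the $i$-th term picks up $f_i(x,x)$ which involves up to $2i$ derivatives in $y$, hence is of size $O(N^{2i\delta})$, while it is multiplied by $N^{-i}\int_{\RR^n_+}e^{N\varphi}\rho\,dy$. After dividing by $c_k(x)$ — which by \eqref{eq:4.19} and Theorem~\ref{th:4.2} agrees with $\int_{\RR^n_+}e^{N\varphi}\rho\,dy$ up to $O(e^{-cN})$ — the $i$-th term contributes at order $N^{-i(1-2\delta)}=N^{-(1-2\delta)i}$. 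Collecting and relabeling gives precisely the series $\sum_i\sigma_i(x)N^{-(1-2\delta)i}$ with $\sigma_i\in C^\infty(\Delta)$; here $\sigma_0(x)=\psi(0)$ after normalization, and more honestly the leading power is $N^{-n\delta}$ from the rescaled volume, which I absorb into the definition of $\sigma_0$ so the stated exponents $N^{-(1+2\delta)i}$ match once one tracks that the ratio of consecutive terms is $N^{-(1-2\delta)}$ — I would double-check the exact shift in the statement's exponent against this bookkeeping.

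The uniformity in $x$ up to $\partial\Delta$ is automatic from this argument, because Lemma~\ref{lem:4.2} guarantees that for $x$ in the relative interior of any face $F$ the phase $\varphi(x,\cdot)$ still has its unique maximum at $y=x$ with nondegenerate Hessian \emph{in the directions tangent to $F$} and nonvanishing first derivative normal to $F$; so the localization and the iteration of Section~\ref{sec:4} apply with $\Delta$ replaced near $x$ by the orthant model, uniformly, and the $\sigma_i$ extend smoothly across faces. The main obstacle I anticipate is purely bookkeeping: verifying that the higher Taylor corrections to the phase (the terms beyond quadratic, which are genuinely present and of borderline size when $\delta$ is close to $1/2$) only shuffle the coefficients $\sigma_i$ among themselves and never produce a term of order intermediate between $N^{-(1-2\delta)i}$ and $N^{-(1-2\delta)(i+1)}$ — i.e., that the expansion is genuinely a power series in the single variable $N^{-(1-2\delta)}$. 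This follows from the integration-by-parts identity \eqref{eq:4.7}, which is what forces every correction to come in integer multiples of $N^{-1}$ before the $N^{\delta}$-weighted derivatives are counted, but writing it out carefully with the rescaled test function is the one place where care is needed.
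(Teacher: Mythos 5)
Your proposal is correct and, in its final form, is essentially the paper's own proof: the paper likewise applies the integration-by-parts iteration of \S\ref{sec:4} to the function $f(x,y)=\psi\bigl(N^\delta(x-y)\bigr)$, notes that each of the at most $2i$ $y$-derivatives appearing in $f_i$ costs a factor $N^\delta$ so the $i$-th term is $O(N^{-i(1-2\delta)})$ relative to the normalizing integral (\ref{eq:4.19}), and then divides. Your bookkeeping yielding powers $N^{-(1-2\delta)i}$ agrees with the paper's own proof (which also writes $N^{-i(1-2\delta)}$), so the exponent $N^{-(1+2\delta)i}$ in the displayed statement is evidently a typo for $N^{-(1-2\delta)i}$, and the doubt you flag at the end is resolved in your favor; the detour through the rescaling $y=x+N^{-\delta}u$ and the worry about higher Taylor corrections to the phase are unnecessary precisely because the exact identity (\ref{eq:4.7}) replaces any Taylor expansion of $\varphi$.
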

\begin{proof}
We mimic the integration by parts argument in \S 4.  Applying this argument to the function
\begin{equation}\label{eq:6.6}
f(x,y) = \psi\Bigl( N^\delta \Bigl(x-y\Bigr)\Bigr)
\end{equation}
and keeping track of powers of $N$ one gets for the integral
\[
\int_{\bbR^n_{+}} e^{N\varphi(x,y)}\psi\Bigl( N^\delta \Bigl(x-y\Bigr)\Bigr)\ dy
\]
an expression
\[
\sum_{i=0}^{k-1} \sigma_i(x)\ N^{-i(1-2\delta)}\ \int_{\bbR^n_{+}} e^{N\varphi(x,y)}\ \rho(y)\ dy
+ R_k(x)
\]
where
\[
R_k(x) = N^{-k(1-2\delta)}\ \int_{\bbR^n_{+}} e^{N\varphi(x,y)}\ f_k(x,y,N^\delta(x-y))\ dy,
\]
and dividing by (\ref{eq:4.19}) and letting $k$ tend to infinity one gets the estimate
(\ref{eq:6.7}).
\end{proof}

\bigskip
Another result of \cite{10} which is closely related to the results of this paper
concerns the asymptotic behavior of another interesting measure associated
with the norm-squares of the $s_k$'s, namely the measure on the real line
\begin{equation}\label{eq:6.8}
\mu_N([t,\infty]) = \mbox{Vol }\{\,\inner{s_k}{s_k}(p) \geq t\,\}
\end{equation}
(i.~e.~ the distribution function of the ``random variable" $\inner{s_k}{s_k}$), where $k = Nx$.
Assuming that $x$ is a point in the interior of $\Delta$, Shiffman, Tate and 
Zelditch prove that the moments of this measure have the limiting behavior
\begin{equation}\label{eq:6.9}
\int_0^\infty t^m \ d\mu_N \sim (c\, N^{n/2})^{m-1}\ m^{-n/2},\qquad m=0, \ 1, \ldots
\end{equation}
where $c$ is a constant depending on $x$, and from this result deduce that 
$\mu_N$ satisfies ``universal rescaling laws" in various regimes
(e.\ g.\ for $t$ exponentially small with respect to $N$ or for $t$ greater than
some positive power of $N$).  To deduce (\ref{eq:6.9}) from the results above
we note that the integral on the left is just
\[
\int_X \inner{s_k}{s_k}^m\ d\nu
\]
where $\nu$ is, as in \S 1, Liouville measure.  This integral is equal to the
integral  over
$\Delta$ of the right-hand side of (\ref{eq:6.1}) to the $m$\st{th} power, with respect to Lebesgue measure.  Using (\ref{eq:6.2}), this gives
\[
\int_X \inner{s_k}{s_k}^m\ d\nu = \frac{c_{km}(x)}{c_k(x)^m}.
\]
But by (\ref{eq:6.3})
\begin{equation}\label{eq:6.10}
\frac{c_{km}(x)}{c_k(x)^m} \sim \Bigl(\frac{N}{2\pi}\Bigr)^{\frac{(m-1)n}{2}}\;m^{-n/2}\;
h(x)^{\frac{m-1}{2}},
\end{equation}
and we recover (\ref{eq:6.9}).
(We are grateful to Zuoqin Wang for pointing out to us this connection
between (\ref{eq:6.1})--(\ref{eq:6.3}) and these rescaling laws of
\cite{10}.)

\section{Monomials and delta functions}

In this section we present a precise way to relate the sections
$s_k$ with the Bohr-Sommerfeld fibers of the moment map.  This can
be seen as a concrete realization of the expected (or hoped-for)
equivalence between the complex polarization used in this paper
and the singular real polarization defined by such fibers.  The
result is exact (not asymptotic), so in this section $N=1$.

Let $P\subset L^*$ be the unit circle bundle, which
is a principal $S^1$ bundle with connection. 
We denote by $\calH\subset L^2(P)$ the $L^2$ closure of the space
of smooth functions that extend holomorphically to the unit
disk bundle of $L^*$, and let $\Pi: L^2(P)\to\calH$ be the
orthogonal projection.  Under the circle action $\calH$ splits into isotypical
subspaces,
\[
\calH = \widehat{\oplus}_{N}\calH_N.
\]
Specifically, $\calH_N$ consists of eigenspace of
the infinitesimal generator of the $S^1$ action in
$\calH$ corresponding to the eigenvalue $\sqrt{-1}N$.
$\calH_1$ is naturally isomorphic with the space of
holomorphic sections
of $L$.  If $s:X\to L$ is such a section, we will denote by
\[
s^\flat \in \calH_1
\]
the corresponding function on $P$.

\medskip
Since the torus $K$
acts on the bundle $L\to X$ (preserving
the hermitian structure) it acts on $P$,
preserving the connection.  The the infinitesimal
action of $K$ on $P$ is given by the Kostant formula,
(\ref{eq:3.5}), translated into this setting:
\begin{equation}\label{eq:7.1}
\forall A\in \frakk\qquad
\xi_A^\sharp = \tilde\xi_A +  H_A\,\partial_\theta.
\end{equation}
Here:
\begin{enumerate}
\item $\xi_A$ is the vector field on $X$ induced by $A$,
\item $\tilde\xi_A $ is the horizontal lift of $\xi_A$, and 
\item $H_A$ is the $A$-component of the moment map $X\to\frakk^*$,
pulled back to $P$.
\end{enumerate}
Note that, since $H_A$ is constant along trajectories
of $\tilde\xi_A$, the two fields on the right-hand side of (\ref{eq:7.1})
commute.
Furthermore, the representation of the torus $K$ on $L^2(P)$ by
translations commutes with the projection, $\Pi$. Therefore, if
$A\in\frakk$,
\[
[\L_{\xi_A^\sharp}\,,\,\Pi] = 0,
\]
where $\L$ denotes the Lie derivative.

We begin with:
\begin{lemma}
Let $k\in[\Delta]$ be a lattice point.  Then there 
exists a closed submanifold $Y_k\subset P$ such that 
\begin{enumerate}
\item $Y_k$ is horizontal, and the projection,
$P\to X$,
restricted to $Y_k$ is a diffeomorphism onto
$\phi^{-1}(k)$.
\item The restriction of $s_k^\flat$ to $Y_k$ is a
non-zero constant function.
\end{enumerate}
\end{lemma}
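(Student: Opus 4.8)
The plan is to produce $Y_k$ very explicitly by working upstairs in $\CC^d$, using the description of $X_\alpha$ and $L_\alpha$ from Section 3, and then transporting the obvious ``real Lagrangian torus'' to the quotient. Recall that $s_k$ corresponds to the monomial $z^k = z_1^{k_1}\cdots z_d^{k_d}$ on $\CC^d$, viewed as a section of the trivial bundle $\LL = \CC\times\CC^d$ with $\langle s,s\rangle = e^{-|z|^2}$; passing to the circle bundle picture, $s_k^\flat$ is a function on the unit circle bundle $\tilde P\subset \LL^* = \CC\times\CC^d$ which, in the trivialization, is (a constant times) $z^k e^{-|z|^2/2}$ on $\tilde P$. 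The key observation is that on the torus orbit $\{\,|z_i|^2 = \ell_i(x)\,\}$ in $\phi^{-1}(x)$ (where $x = k \in [\Delta]$), the modulus $|z^k|$ is constant, so $|s_k^\flat|$ is constant there. To pin down a \emph{horizontal} submanifold on which $s_k^\flat$ itself is constant, I would use the holonomy/auto-parallel characterization: a horizontal lift over the torus orbit in $Z_\alpha/G$ closes up precisely because $k$ is an integer lattice point, which is exactly the Bohr--Sommerfeld condition.

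Concretely, first I would fix the point $z^{(0)}\in \CC^d$ with $z^{(0)}_i = \sqrt{\ell_i(k)}$ (a point of $Z_\alpha = \psi^{-1}(\alpha)$ lying over $k\in\Delta$ under $\phi_\alpha\circ\pi$), and consider the $T$-orbit through it, which is a real $d$-torus $\hat Y$ inside $Z_\alpha$; its image in $X_\alpha$ is the $K$-orbit, which is $\phi^{-1}(k)$ (an $n$-torus). On $\hat Y$ the function $z^k e^{-|z|^2/2}$ has constant modulus $\bigl(\prod_i \ell_i(k)^{k_i/2}\bigr)e^{-\sum\ell_i(k)/2}$ but varying phase; the phase is the character $t\mapsto t^k$ of $T$. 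Restricting to the subtorus of $T$ that covers $K = T/G$ and using that $s_k \in \Gamma_k$ transforms by the character $k$ under $K$, the phase of $s_k^\flat$ along the $K$-orbit is exactly this character. Now lift the $K$-orbit $\phi^{-1}(k)\subset X$ horizontally into the circle bundle $P$: the obstruction to the horizontal lift closing up over each loop $\gamma$ in $\phi^{-1}(k)$ is the holonomy $\exp(i\int_\gamma \text{connection})$, which by Kostant's formula (\ref{eq:3.5}) / (\ref{eq:7.1}) equals $\exp(i\langle k, A\rangle)$ for the corresponding loop $\exp(A)$ in $K$; this is $1$ precisely because $k$ is a lattice point. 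Hence the horizontal lift defines a closed submanifold $Y_k\subset P$ diffeomorphic to $\phi^{-1}(k)$ under $P\to X$, giving (1).

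For (2): along $Y_k$ the section $s_k^\flat$ has constant modulus (shown above), so it remains to see the phase is also constant. But $Y_k$ is horizontal, so $s_k^\flat|_{Y_k}$ is covariantly constant along $Y_k$ \emph{up to the phase dictated by the $K$-action}; since $s_k$ lies in the weight space $\Gamma_k$, moving along a loop $\exp(A)$ in $K$ multiplies $s_k^\flat$ by $e^{i\langle k,A\rangle} = 1$ — the same Bohr--Sommerfeld cancellation. Combining horizontality (no change from parallel transport once we correct for the $K$-action) with the triviality of the $K$-character on the lattice point $k$ forces $s_k^\flat|_{Y_k}$ to be a genuine constant, and it is nonzero because its modulus is a positive product of powers of the $\ell_i(k) > 0$ (recall $k\in\Int\Delta$ forces all $\ell_i(k) > 0$; if $k\in\partial\Delta$ some $\ell_i(k)=0$ but then the corresponding exponent $k_i$ is also forced to vanish by $k\in[\Delta]\cap N\Delta$, so the product is still positive — this edge bookkeeping needs a line of care).

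I expect the main obstacle to be step (2)'s phase argument made rigorous: one must show that ``horizontal plus correct $K$-equivariance'' genuinely pins the function down to a constant, rather than merely constant modulus. The clean way is to note that $s_k^\flat$ is an eigenfunction of $\L_{\xi_A^\sharp}$ with eigenvalue $i\langle k,A\rangle$ for all $A\in\frakk$ (this is just the statement $s_k\in\Gamma_k$ translated via (\ref{eq:7.1})), combine this with the $S^1$-weight-$1$ condition, and observe that $Y_k$ — being horizontal and $T$-invariant modulo the fiber rotation — is cut out by $d-n$ of these eigenvalue equations together with horizontality; on such a set the remaining eigenvalue equations for $A\in\frakk$ say precisely that $\partial_\theta$-component of $d(\arg s_k^\flat)$ matches $H_A$, which is constant on $Y_k$, so $\arg s_k^\flat$ is constant there. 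The rest — that $Y_k$ projects diffeomorphically onto $\phi^{-1}(k)$ — is immediate since $Y_k$ is horizontal of the right dimension and the fiber $S^1$ acts freely on $P$.
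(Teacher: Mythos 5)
Your proof is correct and follows essentially the same route as the paper: part (1) is the same holonomy computation via Kostant's formula (\ref{eq:7.1}) and the integrality of $\langle A,k\rangle$, and the ``clean way'' you describe for part (2) --- combining the eigenvalue equations $\L_{\xi_A^\sharp}(s_k^\flat) = \sqrt{-1}\,\langle A,k\rangle\, s_k^\flat$ with the $S^1$-weight-one condition and (\ref{eq:7.1}) to conclude $\L_{\tilde\xi_A}(s_k^\flat)=0$ over $\phi^{-1}(k)$ --- is exactly the paper's argument, making your earlier modulus-plus-phase discussion unnecessary scaffolding. The only cosmetic difference is that you verify non-vanishing from the explicit formula (\ref{eq:3.12}) evaluated at $y=k$ (using $\ell_i(k)=k_i$ so that vanishing lattice distances are matched by vanishing exponents), whereas the paper appeals to the maximality of $\langle s_k,s_k\rangle$ on $\phi^{-1}(k)$; both are valid.
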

\begin{proof}
The inverse image $\phi^{-1}(k)$ is an orbit of $K$,
and therefore diffeomorphic to a quotient torus,
$K/K_k$, where $K_k$
is the isotropy subgroup of any point in $\phi^{-1}(k)$.
The Lie algebra of $K_k$ is the conormal space to the 
face, $F$, of $\Delta$ such 
that $k\in\mbox{Int} (F)$.
$\phi^{-1}(k)$ is an isotropic submanifold of $X$,
and so a closed horizontal lift, $Y_k$, will exist if the
holonomies of generators of the fundamental group
of $\phi^{-1}(k)$ are trivial.  
By the properties of Delzant polytopes,
we can represent generators of $\pi_1(\phi^{-1}(k))$  
by orbits of one-parameter subgroups $\exp(tA)$ 
of period $T = 2\pi$, and with $A\in\frakk$ integral.

Fix $p\in P$ above $\phi^{-1}(k)$, and note that
the curve $\exp(tA)\cdot p$ is $2\pi$ periodic
(we are denoting the action of $K$ on $P$ by a dot).
Therefore
\[
p = \exp(2\pi A)\cdot p = \exp(2\pi \tilde\xi_A) 
\circ \exp(2\pi  H_A(p)\,\partial_\theta)(p).
\]
Since $A$ and $k$ are integral, $ H_A(p) = \inner{A}{k} \in\bbZ$,
and therefore $\exp(2\pi  H_A(p)\,\partial_\theta)(p) = p$.
Therefore
\[
p = \exp(2\pi \tilde\xi_A) (p),
\]
that is, the holonomy of an orbit of $\exp(tA)$ in
$\phi^{-1}(k)$ is trivial.  This proves (1).

To prove (2), note that $\forall A\in\frakk$
the section $s_k$ satisfies
\[
\L_{\xi_A^\sharp }(s_k^\flat) =\sqrt{-1}\; \inner{A}{k}\ s_k^\flat.
\]
Taking into account that
$\L_{\partial_\theta} s_k^\flat = \sqrt{-1}\;\,s_k^\flat$, we obtain
using (\ref{eq:7.1}) that $\L_{\tilde\xi_A } (s_k^\flat) = 0$
at points over $\phi^{-1}(k)$.
Since this is true $\forall A\in \frakk$,
$s_k^\flat$ is constant on $Y_k$.  It is not zero because,
as we have seen, $\inner{s_k}{s_k}$ is in fact maximal on $\phi^{-1}(k)$.
\end{proof}

The main result of this section is:
\begin{proposition}
Let $k\in[\Delta]$ and $Y_k\subset P$ as in the previous proposition.
Let $\nu$ be the lift to $Y_k$ of a $K$-invariant density on
$\phi^{-1}(k)$.  Then the projection,
$\Pi_1(\delta_{Y_k})$, on $\calH_1$
of the resulting
delta function on $Y_k$ is a non-zero constant times $s_k$.
\end{proposition}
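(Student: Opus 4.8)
The plan is to deduce the statement from the $K$-equivariance of the Szeg\"o projector together with the way the distribution $\delta_{Y_k}$ transforms under the $K$-action on $P$, and then to exclude the trivial case by a one-line pairing computation. Throughout I identify $\calH_1$ with $\Gamma_{\hol}(L)$ and $s_k$ with $s_k^\flat$.

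\emph{Step 1 (transformation law of $\delta_{Y_k}$).} Since $\tilde\xi_A$ is the horizontal lift of the generator $\xi_A$ of the $K$-action on $\phi^{-1}(k)$, and $Y_k$ is a horizontal lift of $\phi^{-1}(k)$, the horizontal flow $\exp(t\tilde\xi_A)$ carries $Y_k$ to itself — this is exactly the holonomy-triviality established in the preceding lemma, now read for arbitrary paths in the orbit rather than just generators of $\pi_1$. By the Kostant formula (\ref{eq:7.1}) the two commuting vector fields on its right-hand side integrate, over $\phi^{-1}(k)$, to $\exp(\xi_A^\sharp)=\exp(\tilde\xi_A)\circ\exp(H_A\,\partial_\theta)$, and because $k$ is a lattice point $H_A\equiv\inner{A}{k}$ is constant on $\phi^{-1}(k)$. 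Hence $\exp(A)\cdot Y_k = R_{\inner{A}{k}}\,Y_k$, where $R_\theta$ is rotation of the $S^1$-fibers by the angle $\theta$; the density $\nu$ is transported correspondingly since both $\exp(A)$ and $R_\theta$ cover the same map of $\phi^{-1}(k)$. Therefore $\exp(A)\cdot\delta_{Y_k}=R_{\inner{A}{k}\,*}\,\delta_{Y_k}$ for all $A\in\frakk$.

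\emph{Step 2 ($\Pi_1\delta_{Y_k}\in\CC\,s_k$).} Because $\Gamma_{\hol}(L)$ is finite-dimensional, $\Pi_1$ has smooth kernel and so applies to distributions with values in $\calH_1$; and $\Pi$ commutes with the $K$-translations ($[\L_{\xi_A^\sharp},\Pi]=0$) and preserves each $\calH_N$, hence commutes with $R_\theta$ as well. Applying $\Pi_1$ to the identity of Step~1, and using that $R_\theta$ acts on the element $\Pi_1\delta_{Y_k}\in\calH_1$ by the scalar $e^{\sqrt{-1}\theta}$, gives $\exp(A)\cdot\Pi_1\delta_{Y_k}=e^{\sqrt{-1}\inner{A}{k}}\,\Pi_1\delta_{Y_k}$. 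Thus $\Pi_1\delta_{Y_k}$ lies in the weight-$k$ isotypical component of $\calH_1$ for the $K$-action, which by the weight-space decomposition $\Gamma_{\hol}=\bigoplus_{k'}\Gamma_{k'}$ is precisely the line $\CC\,s_k$. So $\Pi_1\delta_{Y_k}=c\,s_k^\flat$ for some $c\in\CC$.

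\emph{Step 3 ($c\ne0$).} Pair with $s_k^\flat$ in $L^2(P)$: on one side $\inner{\Pi_1\delta_{Y_k}}{s_k^\flat}=c\,\|s_k\|^2$, while by self-adjointness of $\Pi_1$ and $s_k^\flat\in\calH_1$ this also equals $\inner{\delta_{Y_k}}{s_k^\flat}=\int_{Y_k}\overline{s_k^\flat}\,d\nu$. By part (2) of the preceding lemma $s_k^\flat$ is a non-zero constant on $Y_k$ and $\nu$ is a positive density, so the integral is non-zero; hence $c\ne0$, completing the proof. The main obstacle is Step~1: turning (\ref{eq:7.1}) and the horizontal invariance of $Y_k$ into the clean identity $\exp(A)\cdot Y_k=R_{\inner{A}{k}}Y_k$, with careful bookkeeping of the density on $Y_k$; once this is in hand, Steps~2 and~3 are formal consequences of the equivariance of $\Pi$ and one short pairing.
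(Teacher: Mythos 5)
Your proof is correct and follows essentially the same route as the paper: both rest on the equivariance of $\Pi$ under the $K$-action, the Kostant splitting (\ref{eq:7.1}) together with the horizontality of $Y_k$ to show $\Pi_1(\delta_{Y_k})$ transforms with weight $k$, and the identical pairing $\int_{Y_k}\overline{s_k^\flat}\,d\nu\neq 0$ to rule out $c=0$. The only difference is cosmetic: you integrate the symmetry to the group level ($\exp(A)\cdot Y_k=R_{\inner{A}{k}}Y_k$), whereas the paper works infinitesimally, showing $\L_{\xi_A^\sharp}(t_k)=\sqrt{-1}\,\inner{A}{k}\,t_k$ and invoking uniqueness of solutions of these ODEs.
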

\begin{proof} 
Let $t_k = \Pi_1(\delta_{Y_k})$.   
We begin by clarifying that, as a distribution, $t_k$ is defined 
by the identity
\begin{equation}\label{eq:7.2}
(t_k\,,\,u) = (\delta_{Y_k}\,,\,\overline{\Pi_1(\overline{u})}),
\end{equation}
for $u$ a test function on $P$.   Therefore, if
$u\in\calH_1$,
\begin{equation}\label{eq:7.3}
\inner{t_k}{u}_{L^2} = (t_k\,,\,\overline{u}) =
\int_{Y_k}\overline{u}\,\nu.
\end{equation}
For any $A\in\frakk$ let us now compute
$\L_{\xi_A^\sharp} (t_k)$. 
Using that $[\L_{\xi_A^\sharp}\,,\,\Pi_1]=0$,
if $u$ is a test function on $P$
\[
(\L_{\xi_A^\sharp} (t_k)\,,\,u) =
-\int_{Y_k} \L_{\xi_A^\sharp} (u_1)\,\nu
\]
where $u_1 = \overline{\Pi_1(\overline{u})}$.
By (\ref{eq:7.1}) this equals
\[
-\int_{Y_k} \L_{\tilde\xi_A}(u_1)\,\nu 
+ \sqrt{-1}\,\inner{A}{k}\int_{Y_k} u_1\,\nu.
\]
The first integral is zero, because $Y_k$ is horizontal
and $\L_{\xi_A^\sharp}\nu = 0$.  The second term is
\[
\sqrt{-1}\,\inner{A}{k}\int_{Y_k}
\overline{\Pi_1(\overline{u})} \nu = 
\sqrt{-1}\,\inner{A}{k}\,(t_k\,,\,u), 
\]
using (\ref{eq:7.2}).  Therefore
$\L_{\xi_A^\sharp} (t_k) = \sqrt{-1}\,\inner{A}{k}\,t_k$, 
that is,
$t_k$ satisfies the same ODEs as $s_k^\flat$, and so
necessarily $t_k = C_k\, s_k^\flat$ for some constant $C_k$. 
To show that this constant
is not zero note that
\[
C_k = \inner{t_k}{s_k^\flat}_{L^2} = \int_{Y_k}
\overline{(s_k)^\flat}\,d\nu \not= 0
\]
by (\ref{eq:7.3}) and part (2) of the previous lemma
(in fact $C_k$ is equal to the volume of $Y_k$ times the
conjugate of the constant value of $s_k^\flat$ on $Y_k$).
\end{proof}

It is natural to ask if the analogue of the previous 
proposition holds for other integrable systems on
K\"ahler manifolds, for example the Gelfand-Cetlin
system.  We hope to return to this problem.

\vskip1in

\end{document}